\documentclass[11pt, reqno]{amsart}

\usepackage{amsmath, amsfonts, amssymb, amsbsy, bigstrut, graphicx, enumerate,  upref}

\usepackage[numbers, sort&compress]{natbib} 

\usepackage{pstricks}




\newcommand{\ep}{\epsilon}

\newtheorem{thm}{Theorem}[section]

\newtheorem{prop}[thm]{Proposition}
\theoremstyle{definition}

\newtheorem{ex}[thm]{Example}

\newcommand{\cov}{\mathrm{Cov}}

\newcommand{\ee}{\mathbb{E}}

\newcommand{\mx}{\mathcal{X}}

\newcommand{\pp}{\mathbb{P}}

\newcommand{\ra}{\rightarrow}
\newcommand{\rr}{\mathbb{R}}

\newcommand{\tr}{\operatorname{Tr}}

\newcommand{\var}{\mathrm{Var}}


\numberwithin{equation}{section}

\usepackage{hyperref}

\setcounter{tocdepth}{1}

\begin{document}
\title{The sample size required in importance sampling}
\author{Sourav Chatterjee}
\address{\newline Department of Statistics \newline Stanford University\newline Sequoia Hall, 390 Serra Mall \newline Stanford, CA 94305\newline \newline \textup{\tt souravc@stanford.edu}\newline \textup{\tt diaconis@math.stanford.edu}}
\thanks{Sourav Chatterjee's research was partially supported by NSF grant DMS-1441513}
\thanks{Persi Diaconis's research was partially supported by NSF grant DMS-1208775}
\author{Persi Diaconis}
\keywords{Importance sampling, Monte Carlo methods, Gibbs measure, phase transition}
\subjclass[2010]{65C05, 65C60, 60F05, 82B80}

\begin{abstract}
The goal of importance sampling is to estimate the expected value of a given function with respect to a probability measure $\nu$ using a random sample of size $n$ drawn from a different probability measure $\mu$. If the two measures $\mu$ and $\nu$ are nearly singular with respect to each other, which is often the case in practice, the sample size required for accurate estimation is large. In this article it is shown that in a  fairly general setting, a sample of size approximately $\exp(D(\nu||\mu))$ is necessary and sufficient for accurate estimation by importance sampling, where $D(\nu||\mu)$ is the Kullback--Leibler divergence of $\mu$ from $\nu$. In particular, the required sample size exhibits a kind of cut-off in the logarithmic scale. The theory is applied to obtain a general formula for the sample size required in importance sampling for one-parameter exponential families (Gibbs measures). 
\end{abstract}

\maketitle


\section{Theory}\label{theory}
Let $\mu$ and $\nu$ be two probability measures on a set $\mx$ equipped with some sigma-algebra. Suppose that $\nu$ is absolutely continuous with respect to $\mu$. Let $\rho$ be the probability density of $\nu$ with respect to $\mu$. Let $X_1,X_2,\ldots$ be a sequence of $\mx$-valued random variables with law $\mu$. Let $f:\mx \ra \rr$ be a measurable function. Suppose that our goal is to evaluate the integral 
\[
I(f) := \int_{\mx} f(y) d\nu(y)\,.
\]
The {\it importance sampling estimate} of this quantity based on the sample $X_1,\ldots, X_n$ is given by 
\[
I_n(f) := \frac{1}{n}\sum_{i=1}^n f(X_i) \rho(X_i)\,.
\]
Sometimes, when the probability density $\rho$ is known only up to a normalizing constant --- that is, $\rho(x)= C \tau(x)$ where $\tau$ is explicit but $C$ is hard to calculate --- the following alternative estimate is used:
\begin{equation}\label{jndef}
J_n(f) := \frac{\sum_{i=1}^n f(X_i) \tau(X_i)}{\sum_{i=1}^n \tau(X_i)}\,.
\end{equation}
It is easy to see that
\[
\ee(I_n(f)) = \int_{\mx} f(x) \rho(x) d\mu(x) = \int_{\mx} f(y)d\nu(y)\,.
\]
Therefore, the expected value of $I_n(f)$ is the quantity $I(f)$ that we are trying to estimate. However, $I_n(f)$ may have large fluctuations. The two main problems in importance sampling are: (a) given $\mu$, $\nu$ and $f$, to determine the sample size required for getting a reliable estimate, and (b) given $\nu$ and $f$, to find a sampling measure $\mu$ that minimizes the required sample size among a given class of measures. We address the first problem in this paper. 

A straightforward approach for computing an upper bound on the required sample size is to compute the variance of $I_n(f)$. Indeed, this is easy to compute:
\begin{align}
\var(I_n(f)) &= \frac{1}{n}\biggl(\int_{\mx} f(x)^2 \rho(x)^2 d\mu(x) - I(f)^2\biggr)\nonumber\\
&=  \frac{1}{n}\biggl(\int_{\mx} f(y)^2 \rho(y) d\nu(y) - I(f)^2\biggr)\,.\label{varform}
\end{align}
The formula for the variance can be used, at least in theory, to calculate a sample size that is sufficient for guaranteeing any desired degree of accuracy for the importance sampling estimate. In practice, however, this number is often much larger than what is actually required for good performance. 

Sometimes the variance formula \eqref{varform} is estimated using the simulated data $X_1,\ldots, X_n$. This estimate is known as the empirical variance. There is an inherent unreliability in using the empirical variance to determine convergence of importance sampling. We will elaborate on this in Section \ref{problem}.

We begin by stating our main theorems. Proofs are collected together in Section \ref{proofs}. A literature review on importance sampling is given at the end of this introduction.

There are three main results in this article. The first  theorem, stated below,  says that under a certain condition that often holds in practice, the sample size $n$ required for $|I_n(f)-I(f)|$ to be close to zero with high probability is roughly $\exp(D(\nu||\mu))$ where $D(\nu||\mu)$ is the Kullback--Leibler divergence of $\mu$ from $\nu$. More precisely, it says that if $s$ is the typical order of fluctuations of $\log \rho(Y)$ around its expected value, then a sample of size $\exp(D(\nu||\mu) + O(s))$ is sufficient and a sample of size $\exp(D(\nu||\mu)-O(s))$ is  necessary for $|I_n(f)-I(f)|$  to be close to zero with high probability. The necessity is proved by considering the worst possible $f$ --- which, as it turns out, is the function that is identically equal to $1$. 

An immediate concern that the reader may have is that $|I_n(f)-I(f)|\approx 0$ may not always be the desired criterion for convergence. If $I(f)$ is very small, then one may want to have $I_n(f)/I(f)\approx 1$ instead. A necessary and sufficient condition for this, when $f$ is the indicator of a rare event, is given in Theorem \ref{rarethm} later in this section. 
\begin{thm}\label{impthm}
Let $\mx$, $\mu$, $\nu$, $\rho$, $f$, $I(f)$ and $I_n(f)$ be as above. Let $Y$ be an $\mx$-valued random variable with law $\nu$. Let $L = D(\nu||\mu)$ be the Kullback--Leibler divergence of $\mu$ from $\nu$, that is,
\[
L= D(\nu||\mu) = \int_{\mx} \rho(x)\log \rho(x) d\mu(x) = \int_{\mx} \log \rho(y) d\nu(y) = \ee(\log \rho(Y))\,.
\]
Let $\|f\|_{L^2(\nu)} := (\ee(f(Y)^2))^{1/2}$. If $n = \exp(L + t)$ for some $t\ge 0$, then 
\[
\ee|I_n(f)-I(f)| \le \|f\|_{L^2(\nu)} \bigl(e^{-t/4} + 2\sqrt{\pp(\log \rho(Y) > L + t/2)}\bigr)\,.
\]
Conversely, let $1$ denote the function from $\mx$ into $\rr$ that is identically equal to $1$. If $n= \exp(L-t)$ for some $t\ge 0$, then for any $\delta\in (0,1)$,
\[
\pp(I_n(1)\ge 1-\delta) \le e^{-t/2} + \frac{\pp(\log \rho(Y)\le L-t/2)}{1-\delta}\,.
\]
\end{thm}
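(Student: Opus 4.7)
The plan is to prove the two bounds separately by a truncation argument based on comparing $\log\rho$ to $L$ shifted by $\pm t/2$. For both directions the natural cutoff is $e^{L \pm t/2}$; this splits $\rho$ into a ``typical'' part we can control by a second moment (under the appropriate measure) and an ``atypical'' part we throw away via a tail bound on $\log\rho(Y)$.

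\textbf{Upper bound.} Assume $n = \exp(L+t)$, $t \ge 0$. I would write $f(X_i)\rho(X_i) = Z_i + W_i$, where $Z_i = f(X_i)\rho(X_i)\,\mathbf 1_{\log\rho(X_i)\le L+t/2}$ and $W_i = f(X_i)\rho(X_i)\,\mathbf 1_{\log\rho(X_i)>L+t/2}$, and bound $\ee|I_n(f)-I(f)|$ by the triangle inequality into the ``truncated'' and ``tail'' errors. For the truncated error, the change of measure $d\nu = \rho\,d\mu$ gives
\[
\ee Z_1^2 \;=\; \int f(y)^2\rho(y)\mathbf 1_{\log\rho(y)\le L+t/2}\,d\nu(y)\;\le\; e^{L+t/2}\|f\|_{L^2(\nu)}^2,
\]
so by Cauchy--Schwarz $\ee|\tfrac1n\sum Z_i - \ee Z_1|\le \sqrt{\ee Z_1^2/n} \le \|f\|_{L^2(\nu)} e^{-t/4}$ since $n = e^{L+t}$. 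For the tail error, $\ee|\tfrac1n\sum W_i - \ee W_1|\le 2\ee|W_1|$, and rewriting under $\nu$ and applying Cauchy--Schwarz yields
\[
\ee|W_1| \;=\; \ee\bigl[|f(Y)|\mathbf 1_{\log\rho(Y)>L+t/2}\bigr] \;\le\; \|f\|_{L^2(\nu)}\sqrt{\pp(\log\rho(Y)>L+t/2)}.
\]
Adding gives the claimed inequality.

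\textbf{Lower bound.} Assume $n=\exp(L-t)$, $t\ge 0$. I would write $\rho(X_i)=U_i + V_i$ with $U_i = \rho(X_i)\mathbf 1_{\log\rho(X_i)\le L-t/2}$ and $V_i=\rho(X_i)\mathbf 1_{\log\rho(X_i)>L-t/2}$. The key observation is that under $\mu$, Markov's inequality gives $\pp(\log\rho(X_1)>L-t/2) = \pp(\rho(X_1)>e^{L-t/2})\le e^{-(L-t/2)}$, so by a union bound
\[
\pp(\exists\, i:\,V_i\ne 0)\;\le\; n\cdot e^{-(L-t/2)}\;=\;e^{L-t}e^{-L+t/2}\;=\;e^{-t/2}.
\]
On the complementary event $I_n(1) = \tfrac1n\sum U_i$, and a second application of Markov together with $\ee U_1 = \int \rho\mathbf 1_{\log\rho\le L-t/2}d\mu = \pp(\log\rho(Y)\le L-t/2)$ yields
\[
\pp\Bigl(\tfrac1n\sum U_i \ge 1-\delta\Bigr)\;\le\;\frac{\pp(\log\rho(Y)\le L-t/2)}{1-\delta}.
\]
Combining these two bounds gives the claim.

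The computation itself is short; the content is entirely in choosing the truncation threshold $e^{L\pm t/2}$. This threshold is what converts the factor $e^{L}$ from $\ee_\mu[\rho^2\mathbf 1_{\rho\le e^{L+t/2}}]$ (resp.\ $\pp_\mu(\rho>e^{L-t/2})$) into the clean factor $e^{-t/4}$ (resp.\ $e^{-t/2}$) after dividing by $n=e^{L\pm t}$. The main obstacle, if any, is recognizing that the two error sources --- second-moment fluctuation of the truncated estimator and probability that the truncation is violated --- must be balanced via this single parameter $t/2$; everything else is change-of-measure bookkeeping and Markov's inequality.
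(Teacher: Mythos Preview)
Your proof is correct and essentially identical to the paper's own argument. Both proofs truncate at the threshold $a=e^{L\pm t/2}$, use Cauchy--Schwarz plus the change of measure $d\nu=\rho\,d\mu$ for the tail term, bound the truncated fluctuation via the second moment $\ee[\rho^2 f^2 \mathbf 1_{\rho\le a}]\le a\|f\|_{L^2(\nu)}^2$, and for the lower bound combine a union-bound/Markov estimate on $\{\max_i\rho(X_i)>a\}$ with Markov on the truncated sum; your two-term decomposition $Z_i+W_i$ is exactly the paper's three-term split $|I_n(f)-I_n(h)|+|I_n(h)-I(h)|+|I(h)-I(f)|$ after regrouping the two $W$-pieces.
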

Note that Theorem \ref{impthm} does not just give the sample size required to ensure that $I_n(f)$ is close to $I(f)$ in the $L^1$ sense; the second part of the theorem implies that if we are below the sample size prescribed by Theorem~\ref{impthm}, then for $f\equiv 1$, there is a substantial chance that $I_n(f)$ is actually {\it not close} to $I(f)$. Such lower bounds cannot be given merely by moment estimates. For example, lower bounds on moments like $\ee|I_n(f)-I(f)|$ and $\var(I_n(f))$ imply nothing; $I_n(f)$ may be close to $I(f)$ with high probability and yet $\ee|I_n(f)-I(f)|$ and $\var(I_n(f))$ may be large. The second part of Theorem \ref{impthm} gives an actual lower bound on the sample size required to ensure that $I_n(f)$ is close to $I(f)$ with high probability, and the first part shows that this lower bound matches a corresponding upper bound. It is interesting that the sample size required for small $L^1$ error turns out to be the actual correct sample size for good performance. 

As shown later in this section, it is fairly common that $\log \rho(Y)$ is concentrated around its expected value in large systems.  In this situation, a sample of size roughly $\exp(D(\nu||\mu))$ is both necessary and sufficient. 

The second main result of this article, stated below, gives the analogous result for the estimate $J_n(f)$. The conclusion is essentially the same. 
\begin{thm}\label{selfimpthm}
Let all notation be as in Theorem \ref{impthm} and let $J_n(f)$ be the estimate defined in \eqref{jndef}. Suppose that $n= \exp(L+t)$ for some $t\ge 0$. Let
\[
\ep := \bigl(e^{-t/4} + 2\sqrt{\pp(\log \rho(Y) > L+t/2)}\bigr)^{1/2}\,.
\]
Then 
\[
\pp\biggl(|J_n(f)- I(f)|\ge \frac{2\|f\|_{L^2(\nu)}\ep}{1-\ep}\biggr)\le 2\ep\,.
\]
Conversely, suppose that $n= \exp(L-t)$ for some $t\ge 0$. Let $f(x)$ denote the function from $\mx$ into $\rr$ that is equal to $1$ when $\log \rho(x)\le L-t/2$ and $0$ otherwise. Then $I(f)=\pp(\log \rho(Y)\le L-t/2)$ but $\pp(J_n(f) \ne 1) \le e^{-t/2}$. 
\end{thm}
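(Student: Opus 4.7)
The plan is to reduce everything to Theorem \ref{impthm} by observing that $J_n(f)$ is invariant under rescaling $\tau \mapsto c\tau$, so without loss of generality we may take $\tau = \rho$, in which case $J_n(f) = I_n(f)/I_n(1)$. The upper bound then follows by controlling numerator and denominator separately, and the lower bound follows by a direct union bound plus change of measure.

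For the first (upper bound) part, apply Theorem \ref{impthm} to both $f$ and the constant function $1$. The quantity $\ep^2$ is exactly the bound appearing there, so $\ee|I_n(f) - I(f)| \le \|f\|_{L^2(\nu)} \ep^2$ and $\ee|I_n(1) - 1| \le \ep^2$ (using $\|1\|_{L^2(\nu)} = 1$). Markov's inequality then gives
\[
\pp\bigl(|I_n(f) - I(f)| \ge \|f\|_{L^2(\nu)} \ep\bigr) \le \ep, \qquad \pp\bigl(|I_n(1) - 1| \ge \ep\bigr) \le \ep.
\]
By a union bound, both events fail simultaneously with probability at least $1 - 2\ep$. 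On this good event $I_n(1) \ge 1 - \ep > 0$, and rewriting
\[
J_n(f) - I(f) = \frac{I_n(f) - I(f)\, I_n(1)}{I_n(1)},
\]
the triangle inequality together with $|I(f)| \le \|f\|_{L^2(\nu)}$ (Cauchy--Schwarz) gives $|J_n(f) - I(f)| \le 2\|f\|_{L^2(\nu)} \ep / (1-\ep)$, which is the claimed bound.

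For the second (lower bound) part, take $f = \mathbf{1}_{\{\log \rho \le L - t/2\}}$. Then $I(f) = \pp(\log \rho(Y) \le L - t/2)$ by definition of $\nu$. Since $0 \le f \le 1$ we have $J_n(f) \le 1$, with $J_n(f) = 1$ if and only if $f(X_i) = 1$ for every $i$ with $\rho(X_i) > 0$. Hence
\[
\pp(J_n(f) \ne 1) \le \pp\bigl(\exists\, i \le n : \log \rho(X_i) > L - t/2\bigr) \le n\, \pp_\mu\bigl(\log \rho(X) > L - t/2\bigr).
\]
Changing measure from $\mu$ to $\nu$ and using that $\rho(Y)^{-1} \le e^{-(L - t/2)}$ on the event $\{\log \rho(Y) > L - t/2\}$ yields
\[
\pp_\mu\bigl(\log \rho(X) > L - t/2\bigr) = \ee_\nu\bigl[\rho(Y)^{-1} \mathbf{1}_{\{\log \rho(Y) > L - t/2\}}\bigr] \le e^{-L + t/2}.
\]
Multiplying by $n = e^{L - t}$ gives $e^{-t/2}$, as required.

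I do not expect any serious obstacle: both parts are short consequences of Theorem \ref{impthm} together with routine maneuvers. The only places needing mild care are (i) justifying the reduction to $\tau = \rho$ from scale invariance of $J_n$, and (ii) the change of measure in the lower bound, where the crucial point is that restricting to $\{\log \rho > L - t/2\}$ makes the Radon--Nikodym factor $1/\rho$ exponentially small, which is precisely what converts the factor $n = e^{L-t}$ into the small quantity $e^{-t/2}$.
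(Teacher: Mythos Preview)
Your proof is correct and follows essentially the same approach as the paper. For the upper bound you reduce to $J_n(f)=I_n(f)/I_n(1)$, apply Theorem~\ref{impthm} to both $f$ and $1$, convert the $L^1$ bounds to tail bounds via Markov, and combine them algebraically using $|I(f)|\le\|f\|_{L^2(\nu)}$; this is exactly what the paper does (with the same choice of thresholds $\ep$ and $\delta=\|f\|_{L^2(\nu)}\ep$). For the lower bound the paper obtains $\pp_\mu(\rho(X_1)>a)\le 1/a$ directly from Markov's inequality applied to $\rho(X_1)$ (since $\ee_\mu\rho=1$), whereas you reach the same bound by changing measure to $\nu$ and using $\rho^{-1}\le a^{-1}$ on the event; these are equivalent one-line arguments. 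One trivial point you did not make explicit: if $\ep\ge 1$ the probability bound $2\ep$ is vacuous, so there is nothing to prove in that case.
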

Sometimes, importance sampling is used to estimate the probabilities of rare events under the target measure $\nu$. Typically, the quantity of interest is $\nu(A)$, where $A$ is a rare event under $\nu$ but is not a rare event under $\mu$. The method of estimation is the same as before, that is, let $1_A(x)$ be the function that is $1$ if $x\in A$ and $0$ otherwise, and let $I_n(1_A)$ be the importance sampling estimate of $\nu(A)$. The difference with the previous setting is that when estimating $\nu(A)$, we are not satisfied if $|I_n(1_A)-\nu(A)|$ is small because $\nu(A)$ itself is a small number. Rather, it is satisfactory if the ratio $I_n(1_A)/\nu(A)$ is close to $1$. It turns out that the sample size that is necessary and sufficient for this purpose is not $\exp(D(\nu||\mu))$, but $\exp(D(\nu_A||\mu))$, where $\nu_A$ is the probability measure $\nu$ conditioned on the event $A$. This is quantified by the following theorem, which is the third main result of this paper.
\begin{thm}\label{rarethm}
Let all notation be as in Theorem \ref{impthm}. Let $A$ be any event such that $\nu(A) > 0$ and let $1_A$ be the indicator function of $A$, defined above. Let $\nu_A$ be the measure $\nu$ conditioned on the event $A$, that is, for any event~$B$,
\[
\nu_A(B) := \frac{\nu(A\cap B)}{\nu(A)}\,.
\]
Let $\rho_A(x) := \rho(x)1_A(x)/\nu(A)$ be the probability density function of $\nu_A$ with respect to $\mu$. Let $L_A := D(\nu_A||\mu)$. If $n = \exp(L_A + t)$ for some $t\ge 0$, then
\[
\ee\biggl|\frac{I_n(1_A)}{\nu(A)}-1\biggr|\le e^{-t/4} + 2\sqrt{\pp(\log\rho_A(Y)> L_A + t/2\mid Y\in A)} \,.
\]
Conversely, suppose that $n = \exp(L_A-t)$ for some $t\ge 0$. Then for any $\delta\in (0,1)$,
\[
\pp\biggl(\frac{I_n(1_A)}{\nu(A)} \ge 1-\delta\biggr) \le e^{-t/2} + \frac{\pp(\log \rho_A(Y)\le L_A - t/2\mid Y\in A)}{1-\delta}\,.
\]
\end{thm}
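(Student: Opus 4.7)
The plan is to deduce Theorem~\ref{rarethm} as a direct corollary of Theorem~\ref{impthm} applied to a new pair of measures, $\mu$ and $\nu_A$, with the test function taken to be identically $1$.

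The key algebraic identity I would establish first is
\[
\frac{I_n(1_A)}{\nu(A)} = \frac{1}{n\nu(A)}\sum_{i=1}^n 1_A(X_i)\rho(X_i) = \frac{1}{n}\sum_{i=1}^n \rho_A(X_i)\,.
\]
Read in the language of importance sampling, the right-hand side is exactly the estimator $I_n^A(1)$ one would form to estimate $\int_{\mx} 1\,d\nu_A = 1$ from the sample $X_1,\ldots,X_n$ drawn from $\mu$, using the Radon--Nikodym derivative $\rho_A = d\nu_A/d\mu$. Therefore $I_n(1_A)/\nu(A) - 1 = I_n^A(1) - I^A(1)$, and everything reduces to Theorem~\ref{impthm} applied with $\nu$ replaced by $\nu_A$, $\rho$ replaced by $\rho_A$, $L$ replaced by $L_A = D(\nu_A\Vert\mu)$, and $f$ taken to be the constant function $1$, noting that $\|1\|_{L^2(\nu_A)} = 1$.

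For the upper bound (direct part), I would simply invoke the first half of Theorem~\ref{impthm} in this setting: whenever $n = \exp(L_A+t)$,
\[
\ee\bigl|I_n^A(1) - 1\bigr| \le e^{-t/4} + 2\sqrt{\pp_{\nu_A}(\log\rho_A(Y_A) > L_A + t/2)}\,,
\]
where $Y_A$ has law $\nu_A$. Since $\pp_{\nu_A}(\,\cdot\,) = \pp(\,\cdot\mid Y\in A)$ for $Y \sim \nu$, this rewrites as the inequality claimed in the theorem. For the converse, I would apply the second half of Theorem~\ref{impthm} under the same substitutions: for $n = \exp(L_A - t)$ and any $\delta\in(0,1)$,
\[
\pp\bigl(I_n^A(1) \ge 1-\delta\bigr)\le e^{-t/2} + \frac{\pp_{\nu_A}(\log\rho_A(Y_A)\le L_A - t/2)}{1-\delta}\,,
\]
and again convert the $\nu_A$-probability into the conditional $\nu$-probability given $Y\in A$.

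There is no real obstacle here; the only step that requires a moment's thought is verifying that $\rho_A$ really is a probability density against $\mu$ (which is immediate, since $\int \rho_A\,d\mu = \int 1_A\rho\,d\mu/\nu(A) = \nu(A)/\nu(A) = 1$) and that the conditional probability rewriting is correct. Once the identification $I_n(1_A)/\nu(A) = I_n^A(1)$ is made, the theorem is a one-line consequence of Theorem~\ref{impthm}.
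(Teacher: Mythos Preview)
Your proposal is correct and follows essentially the same approach as the paper: both reduce the statement to Theorem~\ref{impthm} applied with the target measure $\nu$ replaced by $\nu_A$, the density $\rho$ replaced by $\rho_A$, and $f\equiv 1$, after observing the identity $I_n(1_A)/\nu(A)=\frac{1}{n}\sum_{i=1}^n \rho_A(X_i)$. Your write-up is slightly more explicit about the conditional-probability rewriting and the verification that $\rho_A$ is a genuine density, but the argument is the same.
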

We would like to remark here that the upper bounds in Theorems \ref{impthm}, \ref{selfimpthm} and \ref{rarethm} may not be tight. The only purpose of these theorems is to give matching upper and lower bounds on the sample size required for good performance of importance sampling. No attempt was made to get optimal error bounds, especially of the type that is relevant to practitioners.

Another remark is that in practice, $\mu$ is chosen depending on $\nu$, to minimize the required sample size. One potential use for our theorems is that they may be used to choose $\mu$ by minimizing the Kullback--Leibler divergence of $\mu$ from $\nu$ among some class of candidate measures. This point is elaborated in the literature review at the end of this section. 

Sometimes, however, $\mu$ is chosen depending on both $\nu$ and $f$. Since  Theorems~\ref{impthm} and \ref{selfimpthm} give bounds that depend only on the $L^2(\nu)$ norm of $f$, they will not be useful for choosing $\mu$ using fine properties of $f$. This is particularly problematic if $f$ is something like the indicator of a rare event. This issue is partially addressed in Theorem \ref{rarethm}, where $f=1_A$ for some rare event $A$, and the required sample size depends on $\mu$, $\nu$ and the event $A$. Therefore Theorem \ref{rarethm} can be used for choosing $\mu$ depending on properties of both $\nu$ and $f$.

Let us now investigate the implications of our theorems in a few simple examples. More complex examples are given in later sections.

\begin{ex}[Binomial distributions]\label{hypothetical}
Let $\mu = \text{Binomial}(N, p)$ and $\nu = \text{Binomial}(N, r)$, where $r> p$. Then 
\[
\log \rho(x) =  x\log\frac{r}{p} + (N-x)\log\frac{1-r}{1-p}\,.
\]
Let $Y\sim \nu$. Then $L = \ee(\log \rho(Y)) = N H(r,p)$, where
\[
 H(r,p) = r\log\frac{r}{p} + (1-r)\log\frac{1-r}{1-p}\,.
\]
Moreover, the standard deviation of $\log \rho(Y)$ is of order $\sqrt{N}$. Thus, the required sample size is $\exp(N H(r,p) + O(\sqrt{N}))$. On the other hand, a simple calculation shows that if variance is used to determine sample size, the required size would be $\exp(N \, V(r,p))$, where
\[
V(r,p) = \log\biggl(\frac{r^2}{p}+\frac{(1-r)^2}{1-p}\biggr)\,.
\]
By Jensen's inequality, $V(r,p) \ge H(r,p)$. Figure \ref{fig0} shows that graph of $H(r,p)$ versus the graph of $V(r,p)$, as $r$ varies and $p$ is fixed at $1/2$. This elementary example demonstrates how using the variance can lead to unnecessarily large sample sizes.
\end{ex}
\begin{figure}
\includegraphics[width = .7\textwidth]{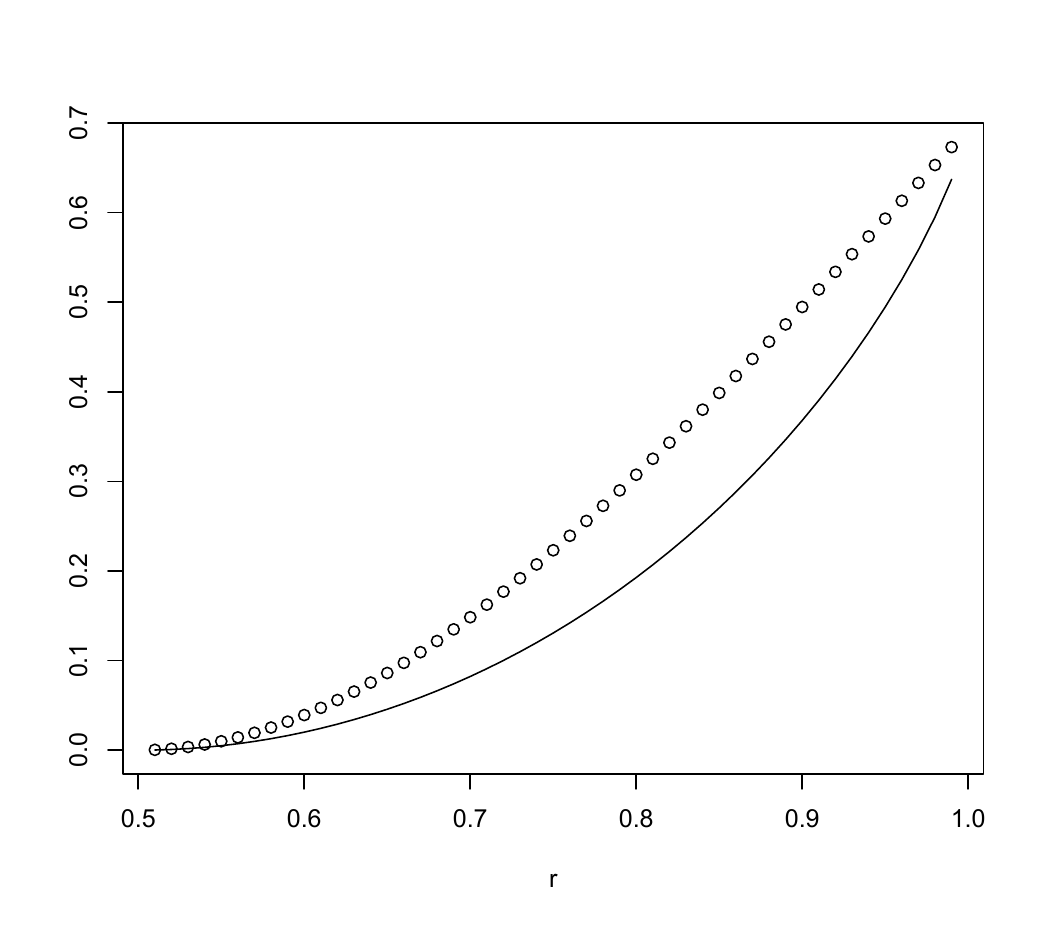}
\caption{Let $H$ and $V$ be as in Example~\ref{hypothetical}. The dotted line represents $V(r,p)$ and the solid line represents $H(r,p)$. Here $p= 0.5$ and $r$ goes from $0.5$ to $1$ on the $x$-axis.}
\label{fig0}
\end{figure}

\begin{ex}[Directed paths]
Let $\mx$ be the set of all monotone paths from $(0,0)$ to $(n,n)$ in the two dimensional lattice. Here, paths are only allowed to go up and to the right. The target measure is the uniform distribution on all such paths. Clearly, $|\mx| = {2n \choose n}$. The sampling measure $\mu$ in this example constructs a random path $\gamma$ as follows (this is known as sequential importance sampling): Choose one of the two directions `up' or `right' with probability $1/2$ until the walk hits the top or right side of the $n\times n$ `box', when the remainder of the walk is forced. If $T(\gamma)$ is the first time the path hits the top or right side then 
\[
\mu(\gamma)=2^{-T(\gamma)}\,.
\]
Both the uniform distribution $\nu(\gamma)=1/{2n\choose n}$ and $\mu(\gamma)$ have the property that, conditional on $T(\gamma)=j$, the paths are uniformly distributed. Thus distributional questions are determined by the distribution of $T(\gamma)$. 

The following proposition from \citet{bassettidiaconis06} shows that under the sampling distribution $\mu$, $T(\gamma)$ is usually about $O(\sqrt{n})$ from the maximum possible $2n-1$, but under the uniform distribution $\nu$, $T(\gamma)$ is usually about $O(1)$ away from $2n-1$. 
\begin{prop}
With the notation above,
\begin{enumerate}
\item[\textup{(a)}] Under the importance sampling distribution $\mu$, 
\[
\mu\{T(\gamma)=j\} = 2^{1-j} {j-1 \choose n-1}, \ \ n\le j\le 2n-1\,.
\]
\item[\textup{(b)}] For $n$ large and fixed positive $x$,
\[
\mu\biggl\{\frac{2n-1-T(\gamma)}{\sqrt{n}} \le x\biggr\} \sim \frac{1}{\pi}\int_0^x e^{-y^2/4}dy\,.
\]
\item[\textup{(c)}] Under the uniform distribution $\nu$, 
\[
\nu\{T(\gamma)=j\} = \frac{2{j-1\choose n-1}}{{2n\choose n}}, \ \  n\le j\le 2n-1\,.
\]
Further $\ee_\nu(T(\gamma)) = (2-\frac{2}{n+1}) n$. 
\item[\textup{(d)}] For $n$ large and any fixed $k$,
\[
\nu\{T(\gamma)=2n-1-k\} \sim \frac{1}{2^{k+1}}, \ \ 0\le k<\infty\,.
\]
\end{enumerate}
\end{prop}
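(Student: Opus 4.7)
The central observation is that, under either measure, the law of $T(\gamma)$ is determined by counting the monotone paths that first reach the boundary (the line $x=n$ or $y=n$) at step $j$. Consider paths that hit the right wall first: such a path sits at $(n-1,j-n)$ after $j-1$ steps and then takes a forced right step. The initial segment consists of $n-1$ right steps and $j-n \le n-1$ up steps, so any arrangement of them stays strictly inside the $n \times n$ box; hence the number of such segments is exactly $\binom{j-1}{n-1}$. By the up--right symmetry, the total number of paths $\gamma \in \mx$ with $T(\gamma)=j$ equals $2\binom{j-1}{n-1}$ for $n \le j \le 2n-1$.

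Parts (a) and (c) follow immediately. Under $\mu$, each trajectory with $T(\gamma)=j$ is produced by exactly $j$ independent fair coin flips (all later steps being deterministic), so has $\mu$-probability $2^{-j}$; multiplying by the count gives (a). Under the uniform $\nu$, each of the $\binom{2n}{n}$ paths in $\mx$ has probability $1/\binom{2n}{n}$, which proves the density formula in (c). For the expected value, combine the identity $j\binom{j-1}{n-1} = n\binom{j}{n}$ with the hockey-stick identity $\sum_{j=n}^{2n-1}\binom{j}{n} = \binom{2n}{n+1}$ to obtain $\ee_\nu T(\gamma) = 2n\binom{2n}{n+1}/\binom{2n}{n}$, then reduce the binomial ratio to $n/(n+1)$.

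For the asymptotic in (b), substitute $j=2n-1-k$ in (a) and invoke Stirling's formula in the local CLT form
\[
\binom{N}{r} \sim 2^N \sqrt{\tfrac{2}{\pi N}}\, \exp\bigl(-\tfrac{2(r-N/2)^2}{N}\bigr)
\]
with $N = 2n-2-k$ and $r = n-1$, so $r - N/2 = k/2$ and $N \sim 2n$. The $2^{1-j}$ factor from (a) cancels the $2^N$ from Stirling exactly, leaving a pointwise asymptotic of order $n^{-1/2} e^{-k^2/(4n)}$ for $\mu\{T(\gamma) = 2n-1-k\}$. Writing $y = k/\sqrt{n}$ converts $\sum_{k \le x\sqrt{n}} \mu\{T(\gamma) = 2n-1-k\}$ into a Riemann sum for an integral of $e^{-y^2/4}$, yielding the stated limit. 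For (d), with $k$ fixed and $n\to\infty$, the factorial manipulation
\[
\frac{\binom{2n-2-k}{n-1}}{\binom{2n}{n}} = \frac{n \cdot n(n-1)\cdots(n-k)}{(2n)(2n-1)\cdots(2n-1-k)}
\]
expresses the ratio as a product of $k+2$ linear factors, each tending to $1/2$; the product tends to $1/2^{k+2}$, so multiplying by the count-factor $2$ yields $1/2^{k+1}$.

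The only nonroutine step is the local central limit estimate behind (b); the rest reduces to direct enumeration together with standard binomial identities, and I expect no further obstacles.
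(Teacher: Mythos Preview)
The paper does not prove this proposition; it quotes it from \cite{bassettidiaconis06}, so there is no in-paper argument to compare against. Your proof is correct and is the natural one: count paths by where and when they first touch the boundary, read off (a) and (c) from that count, apply a local central limit (Stirling) estimate for (b), and a direct factorial expansion for (d).

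Two small remarks. First, the $j$-th step in your description of (a) is not literally ``forced'' under $\mu$---only the steps after hitting the boundary are forced---but this is harmless: you are simply saying the $j$-th step must be ``right'' (resp.\ ``up'') for the path to reach the right (resp.\ top) wall at time $j$, and this is what makes the count $\binom{j-1}{n-1}$ per side. Second, if you carry your computation in (b) through to the constant, you obtain
\[
\mu\{T(\gamma)=2n-1-k\}\sim \frac{1}{\sqrt{\pi n}}\,e^{-k^2/(4n)},
\]
and hence the limiting distribution function $\frac{1}{\sqrt{\pi}}\int_0^x e^{-y^2/4}\,dy$ rather than $\frac{1}{\pi}\int_0^x e^{-y^2/4}\,dy$ as printed. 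Since $\frac{1}{\sqrt{\pi}}\int_0^\infty e^{-y^2/4}\,dy=1$, the $1/\sqrt{\pi}$ is the correct normalization; the $1/\pi$ in the stated formula appears to be a typo.
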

The quantity $L$ of Theorem \ref{impthm} is determined from $\rho(\gamma)=\nu(\gamma)/\mu(\gamma)$ as
\begin{align*}
L &= \sum_\gamma \nu(\gamma) \log\frac{\nu(\gamma)}{\mu(\gamma)} \\
&= -\log {2n\choose n} + \frac{\log 2}{{2n \choose n}} \sum_\gamma T(\gamma) \\
&= -\log {2n\choose n} + \biggl(2-\frac{2}{n+1}\biggr) n \log 2\\
&= \log \sqrt{\pi n} - 2\log 2 + O\biggl(\frac{1}{n}\biggr)\,.
\end{align*}
Thus, $e^L \sim \sqrt{\pi n}/4$, and moreover, $\log \rho(\gamma)$ has fluctuations of order $1$ around its mean. Thus, a sample size of order $\sqrt{n}$ is necessary and sufficient for accuracy of importance sampling in this example. The sufficiency was already observed using variance computations in \citet{bassettidiaconis06}; the necessity is a new result. Similar computations can be carried out for paths allowed to go left or right or up (staying self avoiding) using results of \citet{bousquetmelou14}. 
\end{ex}

\begin{ex}[Estimating the probability of a rare event]\label{thm3ex}
As an example for Theorem \ref{rarethm}, fix $N$ and $p>1/2$ and let $A = \{j: Np\le j\le N\}$. Take $\nu$ to be the Binomial$(N,1/2)$ distribution. Let $b(A;N, 1/2)$ be the probability of $A$ under $\nu$. 
Estimating $b(A; N,1/2)$ by simple sampling from $\nu$ would be a crazy task; for example when $N=100$ and $p=.9$, $b(A;100, 1/2) \approx 0.676049\times 10^{-45}$, which means that we would need roughly $10^{45}$ samples to directly estimate this probability.  A standard importance sampling approach (\citet{siegmund76}) is to sample $X_1,X_2,\ldots,X_n$ from $\mu = \text{Binomial}(N, \theta)$ for some $\theta$ and use 
\[
I_n(A)= \frac{1}{n}\sum_{i=1}^n \frac{\nu(X_i)}{\mu(X_i)} 1_A(X_i)\,.
\]
Theorem \ref{rarethm} shows that this will be accurate in ratio for $n$ of order $e^{L_A}$. The following proposition shows that when $\mu$ is Binomial$(N, \theta)$,  $\theta = p$ minimizes $e^{L_A}$, agreeing with the variance minimization in  \citet{siegmund76}.  When $N=100$ and $p=.9$, $e^{L_A} \approx 1.723 \times 10^{28}$ (still an impossible sample size, but much smaller than $10^{45}$).
\begin{prop}\label{rareprop}
Fix $N$ and $p>1/2$ such that $Np$ is an integer. Let $\mu$ be the \textup{Binomial}$(N, \theta)$ distribution, $\nu$ be the \textup{Binomial}$(N, 1/2)$ distribution and $A = \{j:Np\le j\le N\}$. Then the quantity $L_A$ of Theorem~\ref{rarethm} is asymptotically minimized when $\theta=p$, and with this choice of $\theta$, $L_A$ is aymptotic to $-2N \log(p^p(1-p)^{1-p})$ 
as $N\to \infty$.
\end{prop}
\end{ex}

\vskip.2in
\noindent {\bf Review of the literature.} 
Our interest in this topic started with a question from our colleague Don Knuth in \citet{knuth76}. He used sequential importance sampling to generate random self-avoiding paths starting at $(0,0)$ and ending at $(N,N)$ in a two dimensional $N\times N$ grid. For $N=10$ he calculated the number of paths (about $1.6\times 10^{24}$), the average path length ($92\pm 5$) and the proportion of paths passing through $(5,5)$ ($81\%\pm 10\%$). He noticed huge fluctuations along the way and wanted to know about the accuracy of his estimates. In the follow up work \citet{knuth96}, exact computation showed surprising accuracy for his example. \citet{bassettidiaconis06} and \citet{bousquetmelou14} studied toy versions of Knuth's problem where exact calculations can be done; they confirm the extreme variability and make the accuracy observed mysterious.

In our work,  the choice of the proposal measure $\mu$ is considered fixed. A good deal of the art of successful implementation of importance sampling consists in a careful choice of $\mu$, adapted to the problem under study. This is often done to minimize the variance of the resulting estimate. Our work, especially the main result of Section \ref{problem}, suggests that the variance is a poor measure of accuracy for these long tailed problems. Thus, there is work to be done, exploring ways of adapting the many good ideas below, based on the variance, to minimizing the Kullback--Leibler divergence. 

Any book on simulation will treat importance sampling. We recommend~\citet{hammersleyhandscomb65},  \citet{srinivasan02}, \citet{cappeetal05} and \citet{liu08}. To begin our review of the research literature, a classical choice of the sampling measure $\mu$ for estimating $I(f) = \int fd\nu$ is to take $d\mu(x)$ proportional to $|f(x)|d\nu(x)$ (\citet{kahnmarshall53}). \citet{hesterberg95} suggests using a mixture of measures for $\mu$ with one component proportional to $|f(x)|$ near its maximum. This is closely related to the widely used method of umbrella sampling (\citet{torrievalleau77}; nicely developed in \citet{madras98}). \citet{owenzhou00} combine Hesterberg's idea with control variates to give an attractive, practical approach. In later work, \citet{owenzhou99} suggest an adaptive version, attempting to improve the proposed $\mu$ using previous sampling. This is based on the empirical variance which means that our laments in Section \ref{problem} apply.

The idea of using $L^1$ distance to measure performance of importance sampling has appeared in a few prior instances. Two notable examples are \citet{owen05} and \citet{owen06}, where $L^1$ error was used to compare the Monte Carlo and quasi-Monte Carlo approaches to estimating singular integrands via importance sampling.

Importance sampling is often used to do rare event simulation. Then, it is natural to tilt the sampling distribution $\mu$ towards to the region of interest. \citet{siegmund76} gives an asymptotically principled approach to doing this, which has given rise to much follow-up work, some of it quite deep mathematically. A  unifying account of a variety of importance sampling algorithms for simulating the maxima of Gaussian fields appears in \citet{shietal}. A host of novel ways of building importance sampling estimates for problems such as estimating the size of the union of a collection of sets when the size of each is known is in \citet{naimanwynn97}. The work of Paul Dupuis with many coauthors is notable here. \citet{dupuiswang04} and \citet{dupuisetal12} are representative papers with useful pointers to an extensive literature. \citet{asmussenglynn07} give a textbook account of this part of the subject. 

An important part of the literature adapts importance sampling from the case of independent proposals considered here to use with a Markov chain generating proposals. \citet{madraspiccioni99} give a clear development as do the textbook accounts of \citet{robertcasella04}  or \citet{liu08}.

An important class of techniques for building proposal distributions is known as sequential importance sampling. An early appearance of this to sampling self-avoiding paths occurs in \citet{rosenbluth55}. For contingency table examples see \citet{chenetal05}. For degree sequences of graphs, see \citet{blitzsteindiaconis10}.  For time series and a general review see the textbook by \citet{doucetetal01} or the survey of \citet{chenliu07}.

A relatively recent technique choosing the proposal distribution, which has been particularly successful in the heavy-tailed setting, is a method based on Lyapunov functions developed by \citet{blanchetliu08, blanchetliu10}, \citet{blanchetglynn08} and \citet{blanchetglynnleder12}.

One large related topic is the connection between importance sampling and particle filters. Roughly, when building a proposal $\mu$ sequentially, one begins with a number $N$ of starts. As the proposals are independently built up, some weights may be much larger than others. One can generate $N$ new proposals from the present ones (say with probability proportional to weights). This will replicate some proposals and kill of those with smaller weights. This resampling can be repeated several times. The final weighted samples are used, in the usual way, to form importance sampling estimates. This large enterprise can be surveyed in the textbooks of \citet{delmoral04, delmoral13} and \citet{doucetetal01}.  Work of \citet{chanlai07, chanlai11} harnesses martingale central limit theorems to get the limiting distribution of these importance sampling methods in a variety of complex stochastic models. The web page of Arnaud Doucet is extremely useful. A very clear recent paper is: \citet{delmoraletal15}.

Besides the broad classifications outlined above, importance sampling has a variety of other applications that are harder to categorize. A recent example is the paper by \citet{efron12} that suggests the use of importance sampling for generating from Bayesian posterior distributions. In this context, an interesting note is that simulating from a Bayesian posterior by rejection sampling was investigated by \citet{fmr10}, who found a connection with the Kullback--Leibler divergence that bears some similarities with the results of this paper. 

Two other recent papers have similarities with our work. One is that of \citet{hultnyquist16}, who analyze the performance of importance sampling in the estimation of probabilities of rare events using large deviation techniques. The Kullback--Leibler divergence arises naturally in this work, due to its appearance in large deviation rate functions. The other is a paper of~\citet{agapiouetal15}, who prove that $|I_n(f)-I(f)|$ is small if $n\ge \ee(\rho(Y))\ge e^L$, in the notation of our Theorem~\ref{impthm}. This result is applied to a class of problems that don't overlap with our set of examples, making \cite{agapiouetal15} and this paper complementary to each other.

\section{Testing for convergence}\label{problem} 
The theory developed in Section \ref{theory}, while theoretically interesting, is possibly not very useful from a practical point of view. Determining $D(\nu||\mu)$ requires in-depth knowledge of not only the measure $\mu$, but also the usually much more complicated measure $\nu$. It is precisely the lack of understanding about $\nu$ that motivates importance sampling, so it seems pointless to ask a practitioner to compute the required sample size by using properties of $\nu$. 

To determine whether the importance sampling estimate has converged, a common practice is to estimate $\var(I_n(f))$ by estimating the variance formula~\eqref{varform} using the data from $\mu$. One natural estimate is
\[
v_n(f) := \frac{1}{n^2}\sum_{i=1}^n f(X_i)^2 \rho(X_i)^2 - \frac{I_n(f)^2}{n}\,.
\]
If this estimate is used, then importance sampling is declared to have converged if for some $n$, $v_n(f)$ turns out to be smaller than some pre-specified tolerance threshold $\ep$ (see \citet{robertcasella04}). 


The following theorem shows that using $v_n(f)$ as a diagnostic for convergence of importance sampling is problematic, because  for any given tolerance level $\ep$, there is high probability that the test declares convergence at or before a sample size that depends only on $\ep$ and not on $\mu$, $\nu$ or $f$. This is absurd, since convergence may take arbitrarily long, depending on the problem. 
\begin{thm}\label{flawthm}
Given any $\ep>0$, there exists $n\le \ep^{-2}2^{1+\ep^{-3}}$ such that the following is true. Take any $\mu$ and $\nu$ as in Theorem \ref{impthm}, and any $f:\mx \ra \rr$ such that $\|f\|_{L^2(\nu)}\le 1$. Let $v_n(f)$ be defined as above. Then $\pp(v_n(f)< \ep) \ge 1-4\ep$. 
\end{thm}
Although the upper bound on $n$ is very large --- for example, for $\ep = .1$ the upper bound is roughly $2.14 \times 10^{303}$ --- Theorem \ref{flawthm} gives a conceptual proof that using $v_n(f)$ for testing convergence of importance sampling is fundamentally flawed. As the measures $\mu$ and $\nu$ get more and more singular with respect to each other (which often happens as system size gets larger), importance sampling should take longer to converge. A test that does not respect this feature cannot be a plausible test for convergence. Incidentally, it is not clear whether the upper bound on $n$ in Theorem \ref{flawthm} can be improved to something more reasonable. 

The ineffectiveness of the variance diagnostic is not hard to demonstrate in examples. One such examples are given below. 
\begin{ex}\label{binex1}
In Example \ref{hypothetical} with large $N$, $v_n(f)$ stays extremely close to zero for any realistic value of $n$ because $\rho(X_i)$ is very close to zero with high probability. But here we know that the actual convergence takes place at a sample size that is exponentially large in $N$. For instance, consider $\mu = $ Binomial$(100,.5)$ and $\nu = $ Binomial$(100,.7)$. Let $f$ be the function that is identically equal to $1$.  Figure \ref{fig3} shows the plot of the estimated standard deviation $\sqrt{v_n(f)}$ against $n$, as $n$ ranges from $1$ to $10^6$. The estimated standard deviation remains fairly small throughout. However, since we know the actual value of $I(f)$ in this case (which is $1$), it is easy to compute the actual error $|I_n(f)-I(f)|$ and check that the variance diagnostic is giving a false conclusion. 
\end{ex}
\begin{figure}[t]
\includegraphics[scale = .6]{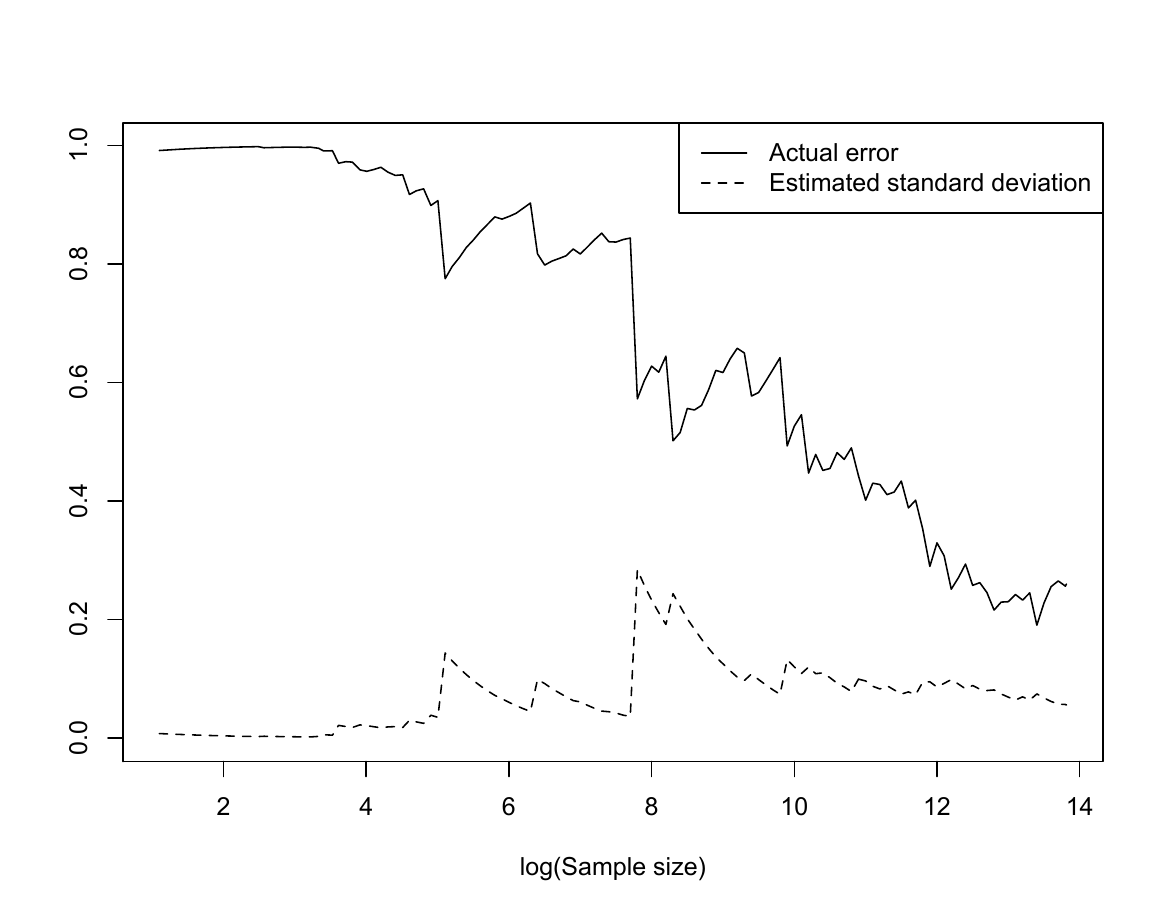}
\caption{Estimated standard deviation $\sqrt{v_n(f)}$ of $I_n(f)$, as $n$ ranges from $1$ to $10^6$, and the actual error $|I_n(f)-I(f)|$. Here $\mu = $ Binomial$(100,.5)$, $\nu = $ Binomial$(100,.7)$ and $f\equiv 1$.} 
\label{fig3}
\end{figure}
There are results in the literature that claim to show that the variance estimation method gives a valid criterion for the convergence of importance sampling. However, what these results actually show is that if $n$ is so large that the importance sampling estimates are accurate, then $v_n(f)$ is small. In other words, the smallness of $v_n(f)$ is a {\it necessary} condition for convergence of importance sampling, but not a {\it sufficient} condition. For a diagnostic criterion to be useful, it needs to be both necessary and sufficient for convergence.

In practice, $v_n(f)$ is not usually the preferred diagnostic. Various self-normalized versions of $v_n(f)$ are used. It is possible that these more complicated estimates are also problematic in the same way, but we do not have a proof. It would be interesting to prove analogs of Theorem \ref{flawthm} for self-normalized diagnostic statistics.

In view of Theorem \ref{impthm}, it is natural to consider estimates of the Kullback--Leibler divergence as possible diagnostic tools for convergence. However, an inspection of the proof of Theorem \ref{flawthm} indicates that such estimates are likely to suffer from similar problems. The issue is that any diagnostic criterion that is itself dependent on the accuracy of an estimate obtained by importance sampling, is unlikely to be effective as a measure of the efficacy of importance sampling.

We suggest the following alternative diagnostic that is not itself an importance sampling estimate of any quantity. As usual, let $\mu$ be the sampling measure, $\nu$ be the target measure, and $\rho = d\nu/d\mu$. Let $X_1, X_2,\ldots$ be i.i.d.~random variables with law $\mu$. Define $q_n := \ee(Q_n)$, where
\[
Q_n := \frac{\max_{1\le i\le n} \rho(X_i)}{\sum_{i=1}^n \rho(X_i)}\,.
\]
The size of $q_n$ is our criterion for diagnosing convergence of importance sampling. The general prescription is that if for some value of $n$ the quantity $q_n$ is smaller than some pre-specified threshold (say, $0.01$), declare that $n$ is large enough for importance sampling to work. Note that the random variable $Q_n$ always lies between $0$ and $1$, and therefore $q_n\in [0,1]$. Moreover, given any $n$, it is possible to estimate $q_n$ up to any desired degree of accuracy by repeatedly simulating $Q_n$ and taking an average, since $q_n = \ee(Q_n)$ and $Q_n$ always lies between $0$ and $1$. Lastly, note that for estimating $q_n$ using simulations in the above manner, it suffices to know the density $\rho$ up to an unspecified normalizing constant. Repeatedly calculating $Q_n$, however, may be computationally expensive if either $n$ is too large or $\rho$ is too complex.

Why should one expect the smallness of the  quantity $q_n$ to be a valid diagnostic criterion for convergence of importance sampling? First, let us hasten to add the caveat that one can produce examples where it does not work. One such example is the following: Take a large number $N$. Let $\mu$ be the uniform distribution on $\{1,2,\ldots, N\}$. Let $\nu$ be the distribution that puts mass $1/2N$ on the points $1,2,\ldots, N-1$, and mass $(N+1)/2N$ on the point $N$. Then $\rho(x) = 1/2$ for $x=1,2,\ldots, N-1$ and $\rho(N)=(N+1)/2$. Under the sampling measure $\mu$, $\rho = 1/2$ with probability $1-1/N$. Therefore when $1\ll n\ll N$, the quantity $q_n$ will be small; but convergence of importance sampling will not happen until $n\gg N$. 

In spite of the above counterexample, we expect that $q_n$ is a valid diagnostic for many natural examples. This is made precise to a certain extent in the setting of Gibbs measures by Theorem~\ref{newthm} in the next section. A general heuristic argument for the effectiveness of the $q_n$ diagnostic, on which the proof of Theorem \ref{newthm} is based, can be described as follows. 

Suppose that $\log \rho$ is concentrated under $\nu$, so that Theorem \ref{impthm} applies, and the sample size required for convergence of importance sampling is roughly $e^L$, where $L = \ee_\nu(\log \rho)$. Take any $n$ below this threshold. Let $M_n:= \max_{1\le i\le n}\rho(X_i)$. Since $\rho(X_1),\rho(X_2),\ldots$ are i.i.d.~random variables, it is easy to see that under mild conditions, $M_n \approx a$ with high probability, where $a$ solves
\begin{align}\label{heur0}
n \pp(\rho(X_1)\ge a) = 1\,.
\end{align}
Next, let $S_n := \sum_{i=1}^n \rho(X_i)$. Since $M_n \approx a$, therefore
\[
S_n \approx \sum_{i=1}^n \rho(X_i) 1_{\{\rho(X_i)\le a\}}\,.
\]
Therefore
\begin{align}\label{heur1}
\ee(S_n) \approx n \ee(\rho(X_1)1_{\{\rho(X_1)\le a\}}) = n \pp_\nu(\rho \le a) = n \pp_\nu(\log \rho \le \log a)\,.
\end{align}
Now, $\ee_\nu(\log \rho)=L > \log a$. Thus, $\pp_\nu(\log \rho \le \log a)$   is a large deviation probability. Therefore under mild conditions, one may expect that
\[
\pp_\nu(\log \rho \le \log a) \approx \pp_\nu(\log \rho \approx \log a)\,.
\]
Plugging this into \eqref{heur1}, we get
\begin{align*}
\ee(S_n) &\approx n\pp_\nu(\log \rho \approx \log a)\\
&=  n \ee(\rho(X_1)1_{\{\rho(X_1)\approx a\}})\\
&= na \pp(\rho(X_1)\approx a) \le na \pp(\rho(X_1)\ge a)\,.
\end{align*}
Using the equation \eqref{heur0} to evaluate the last term, we get $\ee(S_n) \lesssim a$, and therefore $S_n = O(a)$ by Markov's inequality. Since $M_n \approx a$, this shows that
\[
q_n = \ee\biggl(\frac{M_n}{S_n}\biggr) = \Omega(1)\,,
\]
where $\Omega(1)$ means a quantity that is uniformly bounded away from zero as $n\to \infty$. 
The above heuristic shows that if $n \ll e^L$ and some appropriate conditions hold, then $q_n = \Omega(1)$. In other words, smallness of $q_n$ should be a sufficient condition for convergence of importance sampling. This sketch can be made rigorous under certain circumstances. An instance of this is illustrated by Theorem \ref{newthm} in the next section.

The smallness of $q_n$ is also a necessary condition for convergence of importance sampling. Unlike sufficiency, the necessity can be rigorously proved in full generality.
\begin{thm}\label{qthm}
Let all notation be as in Theorem \ref{impthm}. Let $q_n$ be defined as above. Let $\ep_n := \ee|I_n(1)-1|$. Then 
\[
q_n \le C\max\biggl\{\frac{1}{n},\,\frac{\log \log (1/\ep_n)}{\log(1/\ep_n)}\biggr\}\,,
\]
where $C$ is a universal constant. 
\end{thm}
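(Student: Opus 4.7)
The plan is to bound $q_n = \ee(M_n/S_n)$ by separating contributions of atypical and typical behavior of $S_n$, then controlling the tail of $M_n$ via a multi-jump argument that exploits the $L^1$ concentration of $S_n$ around $n$. The right truncation level will turn out to be $K \asymp \log\log(1/\ep_n)/\log(1/\ep_n)$.

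First, Markov's inequality applied to $|S_n/n - 1|$ gives $\pp(S_n \le n/2) \le 2\ep_n$, so the contribution of the event $\{S_n \le n/2\}$ to $q_n$ is at most $2\ep_n$. On $\{S_n > n/2\}$ one has $M_n/S_n \le 2M_n/n$, and for any $K \in (0,1)$,
\[
\ee(M_n/S_n;\,S_n > n/2) \;\le\; 2K + \pp(M_n > Kn),
\]
so the remaining task is to bound $\pp(M_n > Kn)$.

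Here the key device is a multi-jump argument. Write $p = \pp(\rho > Kn)$. If at least $k$ of the $\rho(X_i)$ exceed $Kn$, then $S_n \ge kKn$, hence $|S_n/n - 1| \ge kK - 1$. For $kK > 1$ this yields
\[
\ep_n \;\ge\; (kK-1)\,\binom{n}{k}\,p^k(1-p)^{n-k}.
\]
In the relevant regime $np \le 1$ (otherwise $\pp(M_n > Kn) = \Omega(1)$, which forces $\ep_n$ to be large and makes the theorem trivial), $(1-p)^{n-k}$ is bounded below by a constant, and Stirling's formula gives $np \lesssim (k/e)(\ep_n/(kK-1))^{1/k}$. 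The union bound $\pp(M_n > Kn) \le np$ then controls the tail. Taking $k = \lceil 2/K \rceil$ (so $kK \approx 2$) and $K = C\log\log(1/\ep_n)/\log(1/\ep_n)$ with $C$ large enough, one computes $\ep_n^{1/k} = \ep_n^{K/2} = (\log(1/\ep_n))^{-C/2}$; combined with the $k/e \sim 1/K$ prefactor this makes $\pp(M_n > Kn) \le K$. Collecting the estimates yields $q_n \le 2\ep_n + 3K \lesssim K$, since $\ep_n \lesssim K$ in the meaningful regime. The $1/n$ term in the theorem's $\max$ handles the corner case where $\ep_n$ is so small that $K < 1/n$, via the trivial lower bound $q_n \ge 1/n$ (from $\ee(\rho(X_1)/S_n) = 1/n$ by symmetry) together with a direct argument showing $q_n = O(1/n)$ when $\rho$ is forced to be nearly constant.

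The main obstacle is the tight balancing in the multi-jump calculation: the factor $(k!)^{1/k} \sim k/e$ fights the exponent $1/k$ on $\ep_n$, while the constraint $kK > 1$ must be preserved. This three-way interaction is precisely what produces the $\log\log/\log$ scaling rather than a power-type bound, and closing up the constants cleanly --- especially at the boundary $np \approx 1$ and in the $1/n$ corner --- is what demands the most care.
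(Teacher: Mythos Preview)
Your core strategy matches the paper's: if individual $\rho(X_i)$ are likely to exceed a threshold, then by independence many of them do so simultaneously, forcing $S_n$ large and contradicting the smallness of $\ep_n$. The execution differs. Rather than fixing a threshold $K$ and optimizing, the paper uses a self-referential threshold: from $q_n \le q_n/2 + \pp(M_n \ge q_n S_n/2)$ and the split on $\{S_n < n/2\}$ one obtains (under the working hypothesis $\ep_n \le q_n/8$) that $\pp(M_n \ge q_n n/4) \ge q_n/4$. Dividing $\{1,\dots,n\}$ into $k \approx 8/q_n$ blocks and using independence of the block maxima then lower-bounds $\pp(S_n \ge 2n)$, which is at most $\ep_n$ by Markov. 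This yields $e^{-C q_n^{-1}\log(1/q_n)} \le \ep_n$, from which the $\log\log/\log$ bound follows. The two working hypotheses $\ep_n \le q_n/8$ and $1/n \le q_n/16$ are precisely what generate the $\max$ in the statement: when either fails, $q_n$ is already at most $8\ep_n$ or $16/n$, and nothing further is needed.

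Your version has two genuine gaps. First, the case $np > 1$: you assert this ``forces $\ep_n$ to be large,'' but $\pp(M_n > Kn) = \Omega(1)$ does not by itself say $S_n/n$ is far from $1$; you still need a multi-jump step there (e.g.\ if $np \ge 2k$ then at least $k$ exceedances occur with probability bounded below, whence $S_n \ge 2n$), and the intermediate range $1 < np < 2k$ needs separate care since your lower bound on $(1-p)^{n-k}$ used $np \le 1$. Second, your handling of the $1/n$ corner is wrong as written: the lower bound $q_n \ge 1/n$ points in the wrong direction for an upper bound, and small $\ep_n$ does not force $\rho$ to be ``nearly constant.'' The actual issue is that when $K < 2/n$ your choice $k = \lceil 2/K\rceil$ exceeds $n$ and the binomial argument collapses. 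This is reparable --- take $K = 2/n$, $k = n$, and verify that $np \le n\ep_n^{1/n} \le C/n$ using the fact that in this regime $\log(1/\ep_n) \gtrsim n\log\log(1/\ep_n)$ --- but that is a real computation, not the triviality you suggest. The paper's self-referential setup sidesteps all of this: the $1/n$ in the $\max$ simply records the failure of the hypothesis $q_n \ge 16/n$.
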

As mentioned above, this theorem shows that the smallness of $q_n$ is a necessary condition for convergence of importance sampling (recalling that by Theorem \ref{impthm}, convergence in $L^1$ is equivalent to actual good performance); if $\ep_n$ is small, then $q_n$ is forced to be small.  This is, however, a conceptual theorem. The bound is too poor to be applicable in practice, and the unspecified universal constant $C$ can also be too large for the theorem to have any practical relevance.

The performance of $q_n$ in Example \ref{binex1} is depicted in Figure \ref{fig4}. The figure plots the estimated standard deviation $\sqrt{v_n(f)}$ and the statistic $q_{n}$, against $\log n$ as $n$ ranges from $1$ to $10^6$. As in Figure \ref{fig3}, we see that the estimated standard error is  generally quite misleading and unstable. On the other hand the statistic $q_{n}$ detects the non-convergence in small samples and is very stable. The estimation of $q_{n}$ was based on a sample of size $500$ for each~$n$. 
\begin{figure}[t]
\includegraphics[scale = .6]{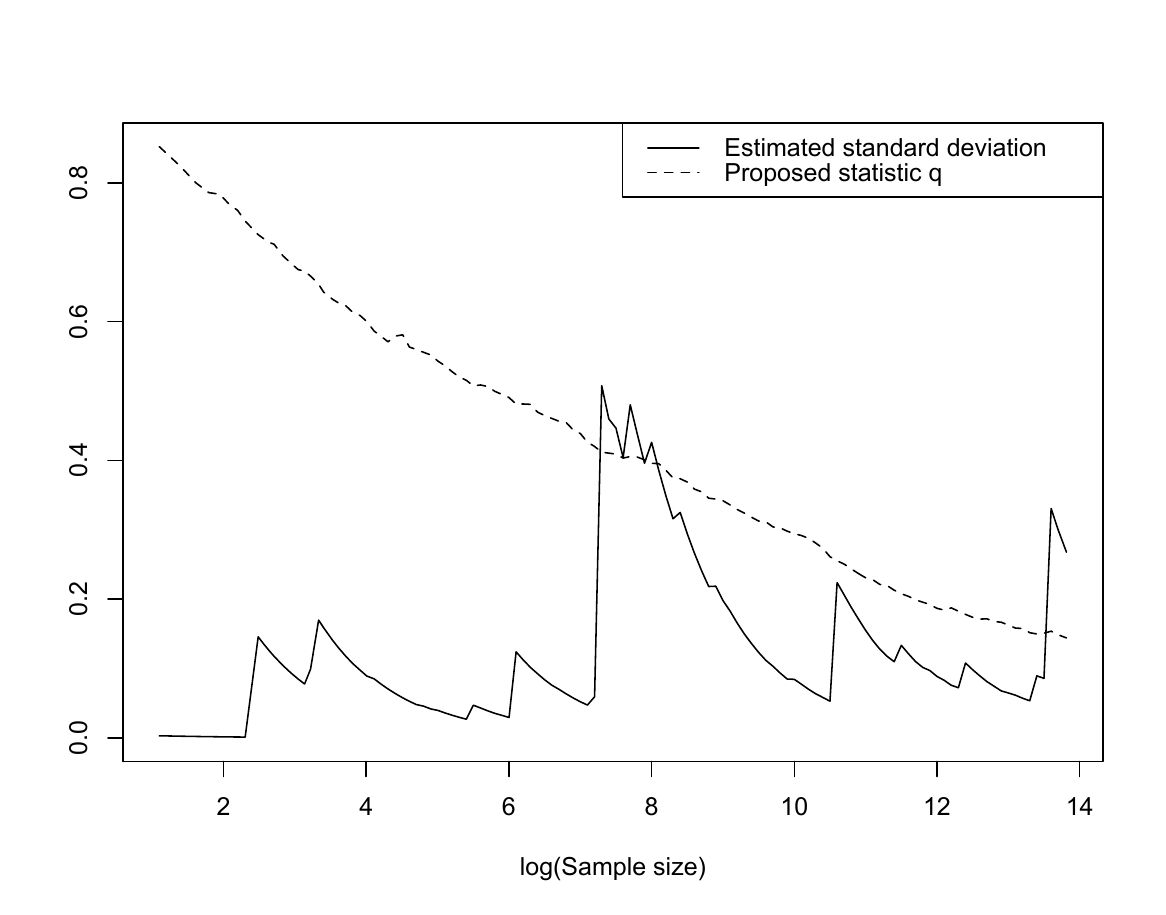}
\caption{Performance of $q_{n}$ in Example \ref{binex1}, plotted against the natural logarithm of the sample size.}
\label{fig4}
\end{figure}

Another illustration is given in Figure \ref{fig5}, which investigates the performance of $q_n$ for Knuth's self-avoiding walks on a $10\times 10$ grid, that was described in the literature review part of Section~\ref{theory}. The plot shows the behavior of $q_n$ as $n$ ranges from $1$ to $10^5$. We see that $q_n$ is not too small (greater than $0.2$) when $n = 10^3$, but starts getting appreciably small around $n = 10^4$. When $n= 10^5$, $q_n$ is minuscule.
\begin{figure}[t]
\includegraphics[scale = .4]{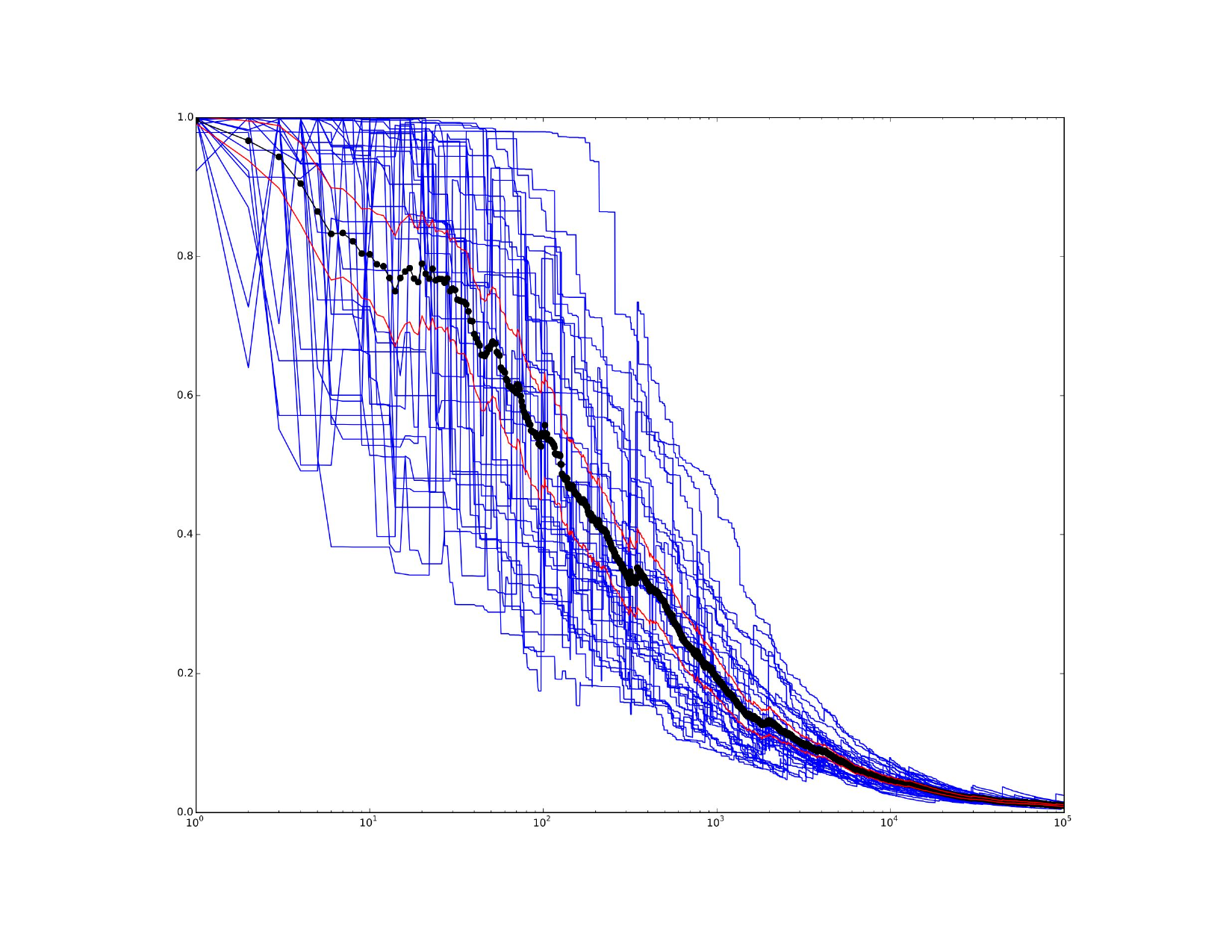}
\caption{Performance of $q_{n}$ for Knuth's self-avoiding walks on a $10\times 10$ grid. The values of $q_n$, denoted by the thick dots, were estimated from $31$ simulations of $Q_n$, which are depicted by the solid lines. Picture courtesy of Marc Coram.}
\label{fig5}
\end{figure}

The random quantity $Q_n$ is closely related to some existing diagnostics in the literature on sequential Monte Carlo (particle filters). It has the same form as the $\infty$-ESS statistic proposed by \citet{hr15} in the context of sequential Monte Carlo. Here ESS stands for `Effective Sample Size', a familiar concept in the sequential Monte Carlo literature. There is a substantial body of work on the efficacy of the effective sample size as a diagnostic tool, possibly beginning with \citet{liuchen95} and \citet{doucetetal01}. See \citet{wlh16} for some latest results. \citet{hr15}  established similar properties for the $\infty$-ESS. It would be interesting to see whether analogs of these results can be proved for the $Q_n$ and $q_n$ statistics proposed in this section.


\section{Importance sampling for exponential families (Gibbs measures)}\label{statmech}
As in Section \ref{theory}, let $\mx$ be a set equipped with some sigma-algebra. Let $\lambda$ be a finite measure on $\mx$ that we shall call the `base measure'. Let $H:\mx \ra\rr$ be a measurable function, called the Hamiltonian, and let $\beta\in \rr$ be a parameter, called the inverse temperature. The exponential family distribution (Gibbs measure) $G_\beta$ on $\mx$ defined by the sufficient statistic (Hamiltonian) $H$ at a parameter value (inverse temperature) $\beta$ is the probability measure on $\mx$ that has probability density 
\[
Z(\beta)^{-1}\exp(-\beta H(x))
\]
with respect to the base measure $\lambda$, where
\[
Z(\beta) = \int_{\mx} \exp(-\beta H(x)) d\lambda(x)
\] 
is the normalizing constant, which is assumed to be finite. Let
\[
F(\beta) := \log Z(\beta)\,.
\]
In physics parlance, the quantity $-F(\beta)/\beta$ is known as the free energy of the system at inverse temperature~$\beta$.


Often, the normalizing constant $Z(\beta)$ is hard to calculate theoretically. Importance sampling is used to estimate $Z(\beta)$ in a variety of ways. See \citet{gelmanmeng98} for a useful review. \citet{lelievreetal10} show the breadth of this problem. One simple technique: Let $\beta_0$ be an inverse temperature at which we know how to generate a sample from the Gibbs measure. For example $\beta_0=0$ is often a good choice, because $G_0$ is nothing but the base measure $\lambda$ normalized to have total mass one. The goal is to estimate $Z(\beta)$ using a sample from $G_{\beta_0}$. Let $X_1,\ldots, X_n$ be an i.i.d.~sample of size $n$ from $G_{\beta_0}$. The importance sampling estimate of $Z(\beta)$ based on this sample is the following:
\[
\hat{Z}_n(\beta):= \frac{Z(\beta_0)}{n}\sum_{i=1}^n \exp(-(\beta-\beta_0)H(X_i))\,.
\] 
It is easy to see that $\ee(\hat{Z}_n(\beta)) = Z(\beta)$. The question is, how large does $n$ need to be, so that the ratio $\hat{Z}_n(\beta)/Z(\beta)$ is close to $1$ with high probability? 


The following theorem shows that under favorable conditions, a sample of size approximately  $\exp(F(\beta_0)-F(\beta)-(\beta_0-\beta)F'(\beta))$ 
is necessary and sufficient. The proof, given in Section \ref{proofs}, is a simple consequence of Theorem \ref{impthm} since $F(\beta_0)-F(\beta)-(\beta_0-\beta)F'(\beta)$ is actually the Kullback--Leibler divergence of $G_{\beta_0}$ from $G_\beta$. This theorem is a result for finite systems. A more general version of this result that applies in the thermodynamic limit is given later in this section. 
\begin{thm}\label{gibbsthm}
Let all notation be as above. Suppose that the  Hamiltonian $H$ satisfies the condition that for some $\beta' > |\beta|$,
\[
\int_{\mx} \exp(\beta'|H(x)|) d\lambda(x) <\infty\,.
\]
Then $F$ is infinitely differentiable at $\beta$. Let 
\[
L := F(\beta_0) - F(\beta) - (\beta_0-\beta)F'(\beta)
\]
and
\[
\sigma := |\beta_0-\beta| \sqrt{F''(\beta)}\,.
\]
If $n = \exp(L+r\sigma)$ for some $r\ge 0$, then 
\[
\ee\biggl|\frac{\hat{Z}_n(\beta)}{Z(\beta)}-1\biggr|\le e^{-r\sigma/4} + \frac{4}{r}\,.
\]
Conversely, if $n = \exp(L-r\sigma)$ for some $r\ge 0$, then for any $\delta\in (0,1)$,
\[
\pp\biggl(\frac{\hat{Z}_n(\beta)}{Z(\beta)}\ge 1-\delta\biggr)\le e^{-r\sigma/2} + \frac{4}{(1-\delta)r^2}\,.
\]
\end{thm}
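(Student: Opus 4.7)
The plan is to recognize the estimator $\hat Z_n(\beta)/Z(\beta)$ as an instance of the importance sampling estimator $I_n(1)$ from Theorem~\ref{impthm}, with sampling measure $\mu=G_{\beta_0}$ and target measure $\nu=G_\beta$, and then reduce the theorem to an application of Theorem~\ref{impthm} together with Chebyshev's inequality applied to $\log\rho(Y)$. Explicitly, the density of $\nu$ with respect to $\mu$ is
\[
\rho(x)=\frac{Z(\beta_0)}{Z(\beta)}\exp(-(\beta-\beta_0)H(x)),
\]
so $\frac{1}{n}\sum_i \rho(X_i)=\hat Z_n(\beta)/Z(\beta)=I_n(1)$, and the question reduces to applying Theorem~\ref{impthm} with $f\equiv 1$ (hence $\|f\|_{L^2(\nu)}=1$).

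First I would verify that the integrability hypothesis implies $F$ is smooth in a neighborhood of $\beta$: since $\int \exp(\beta'|H|)\,d\lambda<\infty$ for some $\beta'>|\beta|$, one can differentiate $Z$ under the integral sign arbitrarily many times, which yields in particular the standard exponential-family identities $F'(\beta)=-\ee_\nu(H)$ and $F''(\beta)=\var_\nu(H)$. Using these I would check the two key computations:
\[
L=D(\nu\,\|\,\mu)=\ee_\nu(\log\rho)=F(\beta_0)-F(\beta)-(\beta-\beta_0)\ee_\nu(H)=F(\beta_0)-F(\beta)-(\beta_0-\beta)F'(\beta),
\]
matching the theorem's definition of $L$, and
\[
\var_\nu(\log\rho)=(\beta-\beta_0)^2\var_\nu(H)=(\beta_0-\beta)^2F''(\beta)=\sigma^2.
\]

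With these in hand, both halves follow from Theorem~\ref{impthm} by setting $t=r\sigma$. For the upper bound, Theorem~\ref{impthm} gives
\[
\ee\bigl|\hat Z_n(\beta)/Z(\beta)-1\bigr|\le e^{-r\sigma/4}+2\sqrt{\pp(\log\rho(Y)>L+r\sigma/2)},
\]
and Chebyshev's inequality yields $\pp(\log\rho(Y)>L+r\sigma/2)\le 4/r^2$, producing the tail term $4/r$. For the lower bound the identical reduction, combined with the one-sided Chebyshev estimate $\pp(\log\rho(Y)\le L-r\sigma/2)\le 4/r^2$, gives exactly the claimed probability bound.

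The steps are essentially bookkeeping once one has the identification $\hat Z_n(\beta)/Z(\beta)=I_n(1)$; the only mildly technical step is the justification of the differentiation-under-the-integral argument that delivers the exponential-family moment identities, which the integrability hypothesis is tailored to provide. No genuine obstacle is anticipated.
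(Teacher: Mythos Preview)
Your proposal is correct and matches the paper's proof essentially line for line: identify $\hat Z_n(\beta)/Z(\beta)$ with $I_n(1)$ for $\mu=G_{\beta_0}$, $\nu=G_\beta$, use the integrability hypothesis to get $F'(\beta)=-\ee_\nu(H)$ and $F''(\beta)=\var_\nu(H)$, compute $\ee_\nu(\log\rho)=L$ and $\var_\nu(\log\rho)=\sigma^2$, and then apply Theorem~\ref{impthm} with $t=r\sigma$ together with Chebyshev's inequality on $\log\rho(Y)$ to bound the tail probabilities by $4/r^2$.
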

It is not difficult to verify by direct calculation that $F''$ is always nonnegative. This implies, in particular, that $F$ is convex. As a consequence of this feature, $L$ and $\sigma$ are also nonnegative.

In standard examples, $F$, $F'$ and $F''$ are all of the same order of magnitude, and the magnitudes are large. Therefore $L$ is large and $\sigma = O(\sqrt{L})$, which implies that the required sample size is concentrated  in the logarithmic scale at $\exp(L + O(\sqrt{L}))$. The situation is illustrated through the following examples.
\begin{ex}[Independent spins]\label{gibbs1}
Take some $N\ge 1$ and let $\mx = \{-1,1\}^N$. Let $\lambda$ be the counting measure on this set, and for $x= (x_1,\ldots, x_N)\in \mx$, let 
\[
H(x) = -\sum_{i=1}^N x_i\,.
\]
The $G_\beta$ is nothing but the joint law of $N$ i.i.d.~random variables that take value $1$ with probability $e^{\beta}/(e^{\beta}+e^{-\beta})$ and $-1$ with probability $e^{-\beta}/(e^\beta+e^{-\beta})$. A simple computation gives $Z(\beta) = 2^N (\cosh\beta)^N$. Therefore
\[
F(\beta) = N \log \cosh \beta + N\log 2\,.
\]
Thus, for any given $\beta_0$ and $\beta$,
\[
L = N\log \frac{\cosh \beta_0}{\cosh\beta} - N(\beta_0-\beta)\tanh\beta\,,
\]
and 
\[
\sigma = 4\sqrt{N} |\beta_0-\beta| \,\textup{sech}\,\beta\,.
\]
Therefore, $L$ is typically of order $N$ and $\sigma$ is typically of order $\sqrt{N}$. 
\end{ex}
\begin{ex}[1D Ising model with periodic boundary]\label{gibbs2}
As in the previous example, let $\mx = \{-1,1\}^N$ and let $\lambda$ be the counting measure on this set. For $x=(x_1,\ldots, x_N)\in \{-1,1\}^N$, let
\[
H(x) = -J\sum_{i=1}^N x_i x_{i+1} - h\sum_{i=1}^N x_i\,,
\]
where $J\ge 0$, $h\in \rr$, and $x_{N+1}$ in the first sum stands for $x_1$. This is the Hamiltonian for the one dimensional Ising model for a system of $N$ spins with periodic boundary. The parameters $J$ and $h$ are traditionally known as the coupling constant and the strength of the external magnetic field. The partition function of this model is easily computed by the transfer matrix method (see \citet{baxter82}): $Z(\beta) = \tr(V(\beta)^N)$, where $V(\beta)$ is the $2\times 2$ matrix
\[
\left[
\begin{array}{cc}
e^{\beta(h+J)}& e^{-\beta J}\\
e^{-\beta J} & e^{-\beta(h-J)}
\end{array}
\right]\,.
\]
In other words, if $\lambda_1(\beta)$ and $\lambda_2(\beta)$ are the two eigenvalues of this matrix (arranged such that $|\lambda_1|\ge |\lambda_2|$), then 
\[
Z(\beta) = \lambda_1(\beta)^N + \lambda_2(\beta)^N\,.
\]
Consequently, 
\[
F(\beta) = \log (\lambda_1(\beta)^N + \lambda_2(\beta)^N)\,.
\]
It is not hard to verify that 
\[
\lambda_1(\beta) = e^{\beta J} \cosh \beta h + \sqrt{e^{2\beta J }(\sinh \beta h)^2 + e^{-2\beta J}}
\]
and  
\[
\lambda_2(\beta) = e^{\beta J} \cosh \beta h - \sqrt{e^{2\beta J }(\sinh \beta h)^2 + e^{-2\beta J}}\,.
\]
Using these formulas it is easy to write down explicit formulas for $L$ and $\sigma$ for any given $\beta$ and $\beta_0$, and compute $a(\beta_0, \beta)$ and $b(\beta_0, \beta)$ such that as $N\ra\infty$, $L\sim N a(\beta_0,\beta)$ and $\sigma \sim \sqrt{N} b (\beta_0,\beta)$. 
\end{ex}

Examples \ref{gibbs1} and \ref{gibbs2} demonstrate how Theorem \ref{gibbsthm} can be applied to calculate the sample size required for importance sampling in statistical mechanical models. However, these examples required exact computations in finite systems, which is rarely possible  in complex models. Our next theorem deals with a generic sequence of models that converge to a  limit. Exact computations are assumed to be possible only in the limit.

Let $\{\mx_N\}_{N\ge 1}$ be a sequence of spaces equipped with sigma-algebras and finite measures $\{\lambda_N\}_{N\ge 1}$. For each $N$ let $H_N:\mx\ra\rr$ be a measurable function, and for each $\beta\in \rr$ let $G_{N, \beta}$ be the probability measure that has probability density proportional to $\exp(-\beta H_N(x))$ with respect to~$\lambda_N$. Let 
\[
Z_N(\beta) := \int_{\mx_N} \exp(-\beta H_N(x))d\lambda_N(x)
\]
be the normalizing constant of $G_{N,\beta}$, and assume that these quantities are finite. Let 
\[
F_N(\beta) := \log Z_N(\beta)\,.
\]
Let $\{L_N\}_{N\ge 1}$ be a sequence of numbers tending to infinity, and let
\[
p(\beta) := \lim_{N\ra\infty} \frac{F_N(\beta)}{L_N}
\]
whenever the limit exists and is finite. For a suitable choice of $L_N$ depending on the situation, the function $p(\beta)$ is sometimes referred to as the thermodynamic limit (or the thermodynamic free energy) of the sequence of systems described above. The thermodynamic limit is said to have a $k^{\mathrm{th}}$ order phase transition at an inverse temperature $\beta$ if the first $k-1$ derivatives of $p$ are continuous at $\beta$ but the $k^{\mathrm{th}}$ derivative is discontinuous at $\beta$.

Fix two inverse temperatures $\beta_0$ and $\beta$ such that $\beta_0< \beta$. The goal is to estimate $F_N(\beta)$ using importance sampling with a sample of size $n$ from the Gibbs measure $G_{N,\beta_0}$, and determine how fast $n$ needs to grow with $N$ such that the ratio of this estimate and the true value tends to one as $N\ra \infty$. Recall that the importance sampling estimate of $Z_N(\beta)$ is
\[
\hat{Z}_{n,N}(\beta) = \frac{Z_N(\beta_0)}{n}\sum_{i=1}^n\exp(-(\beta -\beta_0)H_N(X_i))\,,
\]
where $X_1,\ldots, X_n$ are i.i.d.~draws from $G_{N, \beta_0}$. The following theorem identifies the sample size required for good performance of the above estimate as long as the system does not exhibit a first-order phase transition at $\beta$ in the thermodynamic limit.
\begin{thm}\label{statmechthm}
Let all notation be as above. Let $\{L_N\}_{N\ge 1}$ be a sequence of constants such that the thermodynamic free energy $p$ exists and is differentiable in a neighborhood of $\beta$, and exists at $\beta_0$. Assume that the derivative $p'$ is continuous at $\beta$, and that there exists a finite constant $C$ such that for all $N$ and all $x\in \mx_N$, $|H_N(x)|\le CL_N$. Suppose that the sample size $n = n(N)$ grows with $N$ in such a way that $L_N^{-1}\log n$ converges to a limit $b\in [0,\infty]$, and let 
\[
q(\beta) := p(\beta_0)-p(\beta)-(\beta_0-\beta)p'(\beta)\,.
\]
Then the following conclusions hold:
\begin{enumerate}
\item[\textup{(i)}] If $b> q(\beta)$, then $\hat{Z}_{n,N}(\beta)/Z_N(\beta) \ra 1$ in probability as $N\ra\infty$.
\item[\textup{(ii)}] If $b < q(\beta)$, then $\hat{Z}_{n,N}(\beta)/Z_N(\beta) \not\ra 1$ in probability as $N\ra\infty$.
\item[\textup{(iii)}] If $b = q(\beta)$ and $p'$ is not constant in any neighborhood of $\beta$, then $L_N^{-1}\log \hat{Z}_{n,N}(\beta) \ra p(\beta)$ in probability as $N\ra\infty$. Note that this is a weaker version of the conclusion of part \textup{(i)}.
\end{enumerate}
\end{thm}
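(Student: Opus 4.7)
The proof reduces the thermodynamic-limit statement to the finite-system Theorem~\ref{gibbsthm}. The first step, shared by all three parts, is to establish that $F_N'(\beta)/L_N \to p'(\beta)$ and $F_N''(\beta)/L_N^2 \to 0$, so that the quantities $L = F_N(\beta_0)-F_N(\beta)-(\beta_0-\beta)F_N'(\beta)$ and $\sigma = |\beta_0-\beta|\sqrt{F_N''(\beta)}$ appearing in Theorem~\ref{gibbsthm} satisfy $L/L_N \to q(\beta)$ and $\sigma/L_N \to 0$. Convergence of the derivative follows from the standard fact that pointwise convergence of convex functions (here $F_N/L_N$, convex since $F_N''\ge 0$) to a limit differentiable at $\beta$ implies convergence of derivatives at $\beta$. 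For the second-derivative bound, note that $F_N''(\beta)/L_N^2 = \var_{G_{N,\beta}}(H_N/L_N)$; since $|H_N/L_N|\le C$ and continuity of $p'$ at $\beta$ forces the cumulant generating function $s\mapsto L_N^{-1}[F_N(\beta-s/L_N)-F_N(\beta)]$ to converge pointwise to the linear function $-sp'(\beta)$, the distribution of $H_N/L_N$ under $G_{N,\beta}$ converges weakly to the point mass at $-p'(\beta)$, and boundedness upgrades this to $L^2$-convergence.

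For part~(i), given $b>q(\beta)$, write $\log n = L+r_N\sigma$. Then $r_N\sigma=\log n-L \sim (b-q(\beta))L_N\to\infty$, and $r_N\to\infty$ since $\sigma=o(L_N)$. Theorem~\ref{gibbsthm} yields $\ee|\hat Z_{n,N}(\beta)/Z_N(\beta)-1|\le e^{-r_N\sigma/4}+4/r_N\to 0$. For part~(ii), the symmetric choice $\log n=L-r_N\sigma$ with $r_N\to\infty$ together with the converse half of Theorem~\ref{gibbsthm} gives $\pp(\hat Z_{n,N}(\beta)/Z_N(\beta)\ge 1-\delta)\to 0$ for any $\delta\in(0,1)$, ruling out convergence to $1$ in probability.

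Part~(iii) is the main obstacle. The upper bound $L_N^{-1}\log\hat Z_{n,N}(\beta)\le p(\beta)+\epsilon$ with high probability is immediate from Markov applied to the mean-one ratio $\hat Z_{n,N}(\beta)/Z_N(\beta)$. For the matching lower bound, the hypothesis that $p'$ is not constant in any neighborhood of $\beta$ is used to select a perturbation $\beta_1<\beta$ arbitrarily close to $\beta$ that is a continuity point of $p'$ (possible because the monotone $p'$ has at most countably many jumps) and satisfies $q(\beta_1)<q(\beta)$; indeed, $q'(\beta')=(\beta'-\beta_0)p''(\beta')$, so $q$ is strictly increasing on intervals where $p''>0$, producing such a $\beta_1$. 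Part~(i) applied at $\beta_1$ then yields $\hat Z_{n,N}(\beta_1)/Z_N(\beta_1)\to 1$ in probability. Writing
\[
\frac{\hat Z_{n,N}(\beta)}{\hat Z_{n,N}(\beta_1)}=\sum_{i=1}^n w_i\,e^{-(\beta-\beta_1)H_N(X_i)},\qquad w_i:=\frac{e^{-(\beta_1-\beta_0)H_N(X_i)}}{\sum_j e^{-(\beta_1-\beta_0)H_N(X_j)}},
\]
Jensen's inequality gives $\hat Z_{n,N}(\beta)/\hat Z_{n,N}(\beta_1)\ge \exp(-(\beta-\beta_1)M_n)$, where $M_n=\sum_i w_iH_N(X_i)$ is precisely the self-normalized estimator $J_n(H_N)$ of Theorem~\ref{selfimpthm} at target $G_{N,\beta_1}$. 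Since $q(\beta_1)<b=q(\beta)$, Theorem~\ref{selfimpthm} applies with slack of order $L_N$ and gives $M_n/L_N\to-p'(\beta_1)$ in probability. Combining,
\[
L_N^{-1}\log\hat Z_{n,N}(\beta)\ge p(\beta_1)+(\beta-\beta_1)p'(\beta_1)+o_P(1),
\]
and letting $\beta_1\uparrow\beta$ through continuity points of $p'$, the tangent value $p(\beta_1)+(\beta-\beta_1)p'(\beta_1)$ tends to $p(\beta)$ by continuity of $p$ at $\beta$ and boundedness of $p'$ near $\beta$. The principal difficulty is precisely this lower bound: Markov alone is one-sided, and the combination of convexity (via Jensen) with self-normalized IS at a perturbed inverse temperature is what extracts the matching bound, with the non-constancy of $p'$ ensuring the availability of a usable $\beta_1$.
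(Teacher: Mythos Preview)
Your argument is essentially correct, but it differs from the paper's in two places worth highlighting.

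For the preliminary control of $\sigma$, you argue that $F_N''(\beta)/L_N^2\to 0$ via convergence of the cumulant generating function of $H_N/L_N$ (note a typo: the CGF is $s\mapsto F_N(\beta-s/L_N)-F_N(\beta)$, without the $L_N^{-1}$ factor you wrote; as written your expression tends to $0$, not $-sp'(\beta)$). The paper instead never claims $F_N''(\beta)/L_N^2\to 0$; it uses the integral bound $\int_\beta^{\beta+L_N^{-1/2}}F_N''\le 2CL_N$ to locate a nearby $\gamma_N$ with $F_N''(\gamma_N)\le 2CL_N^{3/2}$, gets concentration of $H_N/L_N$ under $G_{N,\gamma_N}$, and then transfers to $G_{N,\beta}$ by the stochastic monotonicity of $\gamma\mapsto\pp(H_N(Y_{N,\gamma})\ge t)$. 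Both arguments need the fact (which you use implicitly) that $F_N'(\gamma_N)/L_N\to p'(\beta)$ for any $\gamma_N\to\beta$; this is where continuity of $p'$ at $\beta$ is used. With concentration of $\log\rho_N(Y_{N,\beta})/L_N$ in hand, the paper applies Theorem~\ref{impthm} directly rather than Theorem~\ref{gibbsthm}; the content is the same.

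The more substantial difference is part~(iii). The paper's proof is much shorter: from $|H_N|\le CL_N$ one has immediately
\[
\Bigl|\tfrac{d}{d\beta'}\log\hat Z_{n,N}(\beta')\Bigr|\le CL_N,
\]
hence $|\log\hat Z_{n,N}(\beta)-\log\hat Z_{n,N}(\gamma)|\le CL_N(\beta-\gamma)$ deterministically. Combined with $L_N^{-1}\log\hat Z_{n,N}(\gamma)\to p(\gamma)$ from part~(i) at a nearby $\gamma$ with $q(\gamma)<q(\beta)$, and continuity of $p$, this gives both directions at once. Your separate upper bound (Markov) and lower bound (Jensen plus Theorem~\ref{selfimpthm} for $J_n(H_N)$) are correct but considerably more work. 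Note also a small gap in your justification of $q(\beta_1)<q(\beta)$: you invoke $q'(\beta')=(\beta'-\beta_0)p''(\beta')$, but $p''$ need not exist. The paper's computation,
\[
q(\beta)-q(\gamma)\ge(\gamma-\beta_0)(p'(\beta)-p'(\gamma))>0,
\]
uses only convexity and $p'(\gamma)<p'(\beta)$, which follows from monotonicity of $p'$ and its non-constancy near $\beta$.
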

Theorem \ref{statmechthm} has potentially much wider  applicability than Theorem \ref{gibbsthm}, since thermodynamic limits are known in many important statistical mechanical systems. Classical examples from statistical physics include the 2D Ising model, the six and eight vertex models, and many others (see \citet{baxter82} and \citet{mccoy10}). Recently, a variety of exponential random graph models have been explicitly `solved' (see \citet{chatterjeediaconis13}, \citet{kenyonetal14}, \citet{kenyonyin14} and \citet{bhattacharyaetal15}). Similar progress has been made for non-uniform distributions on permutations (see \citet{starr09}, \citet{mukherjee13} and \citet{kenyonetal15}). All of these models provide examples for our theory. 


The main strength of Theorem \ref{statmechthm} is also its main weakness: While it gives a definitive answer for exactly solvable models, the theorem is not  useful for systems that are not exactly solvable in a thermodynamic limit. As discussed in Section \ref{problem}, what a practitioner really wants is a diagnostic test that will confirm whether importance sampling has converged. Interestingly, it turns out that the use of the alternative diagnostic test proposed in Section \ref{problem} can be partially justified in the setting of Theorem \ref{statmechthm}, under one additional assumption. The extra assumption is that the system has no first-order phase transition at any point between $\beta_0$ and $\beta$, strengthening the assumption made in Theorem \ref{statmechthm} that there is no first-order phase transition at $\beta$. 

Take $\beta_0$ and $\beta$ such that $\beta_0<\beta$. Recall the quantities $Q_n$ and $q_n$ defined in Section \ref{problem}. Since there are two parameters $n$ and $N$ involved here, we will write $q_{n,N}$ and $Q_{n,N}$ instead of $q_n$ and $Q_n$. Then note that
\[
Q_{n,N} = \frac{\max_{1\le i\le n}\exp(-(\beta-\beta_0)H_N(X_i))}{\sum_{i=1}^n \exp(-(\beta-\beta_0)H_N(X_i))}\,.
\]
and $q_{n,N}=\ee(Q_{n,N})$. (Note that $q_{n,N}$ has nothing to do with $q(\beta)$.) The following theorem shows that if $n$ is large enough (depending on $N$) for the importance sampling to work, then $q_{n,N}$ is exponentially small in $L_N$.  Otherwise, it is not exponentially small.
\begin{thm}\label{newthm}
Let all notation and assumptions be as in Theorem \ref{statmechthm}. Additionally, assume that there is an open interval $I\supseteq[\beta_0,\beta]$ such that the thermodynamic free energy $p$ is well-defined and continuously differentiable in $I$, and that $p'$ is not constant in any nonempty open subinterval of $I$. Then:
\begin{enumerate}
\item[\textup{(i)}] If $b\le q(\beta)$, then 
\[
\lim_{N\ra\infty}\frac{\log q_{n,N}}{L_N} = 0\,.
\]
Moreover, $L_N^{-1}\log Q_{n,N} \ra 0$ in probability as $N\ra\infty$. 
\item[\textup{(ii)}] If $b > q(\beta)$, then
\[
\limsup_{N\ra\infty} \frac{\log q_{n,N}}{L_N} < 0\,.
\]
Moreover, there exists $c < 0$ such that $\pp(L_N^{-1}\log Q_{n,N} \le c) \ra 1$ as $N\ra\infty$.
\end{enumerate}
In particular, if $n$ grows with $N$ so fast that $q_{n,N}$ decays to zero like a negative power of $L_N$, then the estimated free energy $L_N^{-1}\log \hat{Z}_{n,N}(\beta)$ converges to the correct limit $p(\beta)$ in probability. 
\end{thm}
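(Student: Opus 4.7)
The plan is to rewrite $Q_{n,N}=M_n/S_n$ with $M_n=\max_i\rho_N(X_i)$ and $S_n=\sum_i\rho_N(X_i)$, where $\rho_N=dG_{N,\beta}/dG_{N,\beta_0}=\exp(F_N(\beta_0)-F_N(\beta)-(\beta-\beta_0)H_N(\cdot))$; the shared normalizing constant cancels in the ratio. The central analytic input is a large deviation principle for $L_N^{-1}\log\rho_N(X)$ under $\mu=G_{N,\beta_0}$: the cumulant generating function $L_N^{-1}(F_N(\beta_0-\theta)-F_N(\beta_0))$ converges to $p(\beta_0-\theta)-p(\beta_0)$, which is continuously differentiable and strictly convex on $I-\beta_0$ (the strict convexity coming from the hypothesis that $p'$ is nowhere locally constant). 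The Gärtner--Ellis theorem then gives an LDP for $L_N^{-1}\log\rho_N$ under $\mu$ with rate function $\tilde{I}$ equal to the Legendre dual. A direct computation at $\theta^*=\beta_0-\beta$ yields $\tilde{I}(q(\beta))=q(\beta)$ and $\tilde{I}'(q(\beta))=1$; strict convexity then forces $\tilde{I}(a)>a$ for every $a\ne q(\beta)$. By exponential tilting, $L_N^{-1}\log\rho_N(Y)$ under $\nu$ obeys an LDP with rate function $J(a)=\tilde{I}(a)-a$, uniquely minimized at $q(\beta)$ with value $0$.

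For part (ii), when $b>q(\beta)$, Theorem \ref{statmechthm}(i) already gives $L_N^{-1}\log S_n\to b$ in probability. Let $a^*>q(\beta)$ solve $\tilde{I}(a^*)=b$; then $a^*<b$ by the inequality $\tilde{I}(a)>a$ for $a>q(\beta)$. For any $a'\in(a^*,b)$, the Chernoff--union bound gives $\pp(M_n>e^{a'L_N})\le n\,\pp_\mu(\rho_N>e^{a'L_N})\le\exp((b-\tilde{I}(a')+o(1))L_N)\to 0$. Hence $L_N^{-1}\log Q_{n,N}\le a'-b=:c<0$ with probability tending to $1$, giving the probabilistic statement; bounded convergence (using $Q_{n,N}\le 1$) then yields $\limsup L_N^{-1}\log q_{n,N}\le c<0$.

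For part (i), when $b\le q(\beta)$, the plan is to formalize the heuristic in Section \ref{problem}. Let $a\le q(\beta)$ solve $\tilde{I}(a)=b$ on the branch to the right of the minimum of $\tilde{I}$. In the strict case $b<q(\beta)$, fix $\epsilon>0$ with $a+\epsilon<q(\beta)$ and truncate at $t=e^{(a+\epsilon)L_N}$: the LDP gives $\pp(M_n\le t)\to 1$, so with high probability $S_n$ equals the truncated sum $T_n:=\sum_i\rho_N(X_i)\,1_{\rho_N(X_i)\le t}$, and the identity $\ee_\mu[\rho_N\,1_{\rho_N\le t}]=\pp_\nu(\rho_N\le t)$ combined with the $\nu$-side LDP yields $\ee[T_n]=e^{(a+O(\epsilon))L_N}$. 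Markov's inequality then gives $T_n\le Ce^{(a+\epsilon)L_N}$ with high probability. Combined with the LDP lower bound $M_n\ge e^{(a-\epsilon)L_N}$ with high probability, this yields $Q_{n,N}\ge e^{-O(\epsilon)L_N}/C$ with high probability; letting $\epsilon\downarrow 0$ gives $L_N^{-1}\log Q_{n,N}\to 0$ in probability and $L_N^{-1}\log q_{n,N}\to 0$. The boundary case $b=q(\beta)$ is handled by Markov on $S_n$ directly ($\ee[S_n]=n=e^{q(\beta)L_N}$) together with the LDP lower bound on $M_n$. A Paley--Zygmund refinement of the truncated-sum argument further shows $q_{n,N}$ is bounded away from $0$ when $b<q(\beta)$ strictly; combining this with parts (i) and (iii) of Theorem \ref{statmechthm} gives the final ``in particular'' assertion.

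The main obstacle is part (i): one needs matching LDP upper and lower tails for $\log\rho_N$ under both $\mu$ and $\nu$ (yielding the max estimate and the truncated-sum expectation), plus a second-moment refinement to get the strong lower bound on $q_{n,N}$; all of these rely on the strict convexity of $p$ afforded by the hypothesis that $p'$ is not locally constant, and the argument degenerates at the boundary $b=q(\beta)$, where a separate Markov-based sub-argument takes over.
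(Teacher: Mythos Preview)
Your overall strategy is sound and part (i) follows a genuinely different route from the paper. The paper does \emph{not} invoke an LDP. Instead, it locates the intermediate temperature $\gamma\in[\beta_0,\beta]$ with $q(\gamma)=b$, applies parts (i) and (iii) of Theorem~\ref{statmechthm} at $\gamma$ to get
\[
\frac{1}{L_N}\log\Bigl(\sum_i e^{-(\gamma-\beta_0)H_N(X_i)}\Bigr)\to (\gamma-\beta_0)p'(\gamma)
\]
in probability, and then uses the convexity of this log-sum in the temperature variable to extract a lower bound on $\max_i(-H_N(X_i))/L_N$. Combined with the elementary inequality
\[
\log\sum_i e^{-(\beta-\beta_0)H_N(X_i)}\le(\beta-\gamma)\max_i(-H_N(X_i))+\log\sum_i e^{-(\gamma-\beta_0)H_N(X_i)},
\]
this pins down $L_N^{-1}\log Q_{n,N}$. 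Your LDP route is more systematic and perhaps more transparent, but it requires you to verify that G\"artner--Ellis actually applies: the limiting cumulant generating function is only known to exist and be $C^1$ for tilting parameters corresponding to temperatures in the bounded interval $I$, not on all of $\rr$. This is not fatal---the exposed points you need all sit inside that range, and the upper bound follows from Chernoff with $\theta$ restricted to the interval---but it should be said explicitly rather than swept into a one-line citation.

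There is, however, a genuine gap in your part (ii). From $\pp(L_N^{-1}\log Q_{n,N}\le c)\to 1$ and $Q_{n,N}\le 1$ you cannot conclude $\limsup L_N^{-1}\log q_{n,N}<0$ by ``bounded convergence'': you only get
\[
q_{n,N}\le e^{cL_N}+\pp\bigl(Q_{n,N}>e^{cL_N}\bigr),
\]
and the second term must itself decay exponentially in $L_N$. Your Chernoff--union bound on $M_n$ does give an exponential rate, but your control on $S_n$ comes from Theorem~\ref{statmechthm}(i), which only yields convergence in probability with no rate. The paper closes this gap by returning to the quantitative estimate of Theorem~\ref{impthm}: it shows $\ee|V_N-1|\le e^{-C_2L_N}$ by combining the explicit error bound there with an exponential Chebyshev inequality for the tail $\pp(-H_N(Y)\ge L_N(p'(\beta)+c))$. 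You need an analogous step---either reproduce this argument, or upgrade your $\nu$-side LDP to give an exponential rate on $\pp(V_N<1/2)$.
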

Incidentally, the binomial distribution, as well as more complicated systems like Knuth's self-avoiding paths, can be put into the framework of Theorem \ref{newthm} by an appropriate choice of the Hamiltonian and the inverse temperatures $\beta_0$ and $\beta$, so that the system at inverse temperature $\beta_0$ gives the sampling distribution and the system at inverse temperature $\beta$ gives the target distribution. The main theoretical question would be to prove the absence of a phase transition between $\beta_0$ and $\beta$.

\section{Proofs}\label{proofs}
\begin{proof}[Proof of Theorem \ref{impthm}]
Suppose that $n = e^{L+t}$ and let $a := e^{L+t/2}$. Let $h(x) = f(x)$ if $\rho(x)\le a$ and $0$ otherwise.
Then 
\[
|I_n(f)-I(f)| \le |I_n(f)-I_n(h)| + |I_n(h)-I(h)|+|I(h)-I(f)|\, .
\]
First, note that by the Cauchy--Schwarz inequality,
\begin{align*}
|I(h)-I(f)|&\le \ee(|f(Y)|;\rho(Y)>a)\le \|f\|_{L^2(\nu)}\sqrt{\pp(\rho(Y) >a)}\,.
\end{align*}
Similarly, 
\begin{align*}
\ee|I_n(f)-I_n(h)| &\le \ee|\rho(X_1) f(X_1)-\rho(X_1)h(X_1)|\\
&= \ee(|f(Y)|;\rho(Y)>a)\le \|f\|_{L^2(\nu)}\sqrt{\pp(\rho(Y) >a)}\,.
\end{align*}
Finally, note that 
\begin{align*}
\ee|I_n(h)-I(h)| &\le \sqrt{\var(I_n(h))}\\
&= \sqrt{\frac{\var(\rho(X_1)h(X_1))}{n}}\\
&\le \sqrt{\frac{\ee(\rho(X_1)^2h(X_1)^2)}{n}}\\
&\le \sqrt{\frac{a\, \ee(\rho(X_1)f(X_1)^2)}{n}}\\
&= \|f\|_{L^2(\nu)} \sqrt{\frac{a}{n}}\, .
\end{align*}
Combining the upper bounds obtained above, we get the first inequality in the statement of the theorem. 

Next, suppose that $n= e^{L-t}$ and let $a = e^{L-t/2}$. Markov's inequality gives 
\begin{align}\label{rhoineq}
\pp(\rho(X_1) > a) \le \frac{\ee(\rho(X_1))}{a} = \frac{1}{a}\, .
\end{align} 
Also,
\begin{align*}
\ee(\rho(X_1);\rho(X_1)\le a) = \pp(\rho(Y) \le a)\,.
\end{align*}
Thus,
\begin{align*}
&\pp(I_n(1) \ge 1-\delta) \\
&\le \pp\bigl(\max_{1\le i\le n} \rho(X_i)> a\bigr) + \pp\biggl(\frac{1}{n}\sum_{i=1}^n \rho(X_i)1_{\{\rho(X_i)\le a\}} \ge 1-\delta\biggr)\\
&\le \sum_{i=1}^n \pp(\rho(X_i)> a) + \frac{1}{1-\delta} \ee\biggl(\frac{1}{n}\sum_{i=1}^n \rho(X_i)1_{\{\rho(X_i)\le a\}}\biggr)\\
&\le \frac{n}{a}+\frac{\pp(\rho(Y) \le a)}{1-\delta}\, .
\end{align*}
This completes the proof of the second inequality in the statement of the theorem. 
\end{proof}
\begin{proof}[Proof of Theorem \ref{selfimpthm}]
Suppose that $n = e^{L+t}$ and let $a = e^{L+t/2}$. Let
\[
b := \sqrt{\frac{a}{n}} + 2\,\sqrt{\pp(\rho(Y)>a)}\,.
\]
Then by Theorem \ref{impthm}, for any $\ep,\delta\in(0,1)$,
\[
\pp(|I_n(1)-1|\ge \ep) \le \frac{b}{\ep}
\]
and
\[
\pp(|I_n(f)-I(f)|\ge \delta)\le \frac{\|f\|_{L^2(\nu)}b}{\delta}\,.
\]
Now, if $|I_n(f)-I(f)|<\delta$ and $|I_n(1)-1|< \ep$, then
\begin{align*}
|J_n(f)-I(f)| &= \biggl|\frac{I_n(f)}{I_n(1)} - I(f)\biggr|\\
&\le \frac{|I_n(f)-I(f)| +|I(f)||1-I_n(1)|}{I_n(1)}\\
&< \frac{\delta + |I(f)| \ep}{1-\ep}\,.
\end{align*}
Taking $\ep = \sqrt{b}$ and $\delta = \|f\|_{L^2(\nu)}\ep$ completes the proof of the first inequality in the statement of the theorem. Note that if $\ep$ turns out to be bigger than $1$, then the bound is true anyway. 

Next, suppose that $n = e^{L-t}$ and let $a=e^{L-t/2}$. Let $f(x) = 1$ if $\rho(x)\le a$ and $0$ otherwise. Then $I(f)=\pp(\rho(Y) \le a)$ and  by \eqref{rhoineq}, 
\begin{align*}
\pp(J_n(f)\ne 1) &\le \sum_{i=1}^n \pp(\rho(X_i)> a) \le \frac{n}{a}\,.
\end{align*}
This completes the proof of the theorem. 
\end{proof}
\begin{proof}[Proof of Theorem \ref{rarethm}]
Let
\[
K_n := \frac{I_n(1_A)}{\nu(A)} = \frac{1}{n}\sum_{i=1}^n \rho_A(X_i)\, .
\]
Suppose that $n= e^{L+t}$ and let $a = e^{L+t/2}$. Applying Theorem \ref{impthm} with $\rho$ replaced by $\rho_A$, this gives
\begin{align*}
\ee|K_n - 1| \le \sqrt{\frac{a}{n}} + 2\sqrt{\pp(\rho_A(Y)>a\mid Y\in A)} \,,
\end{align*}
which is the first assertion of the theorem. The second claim follows similarly.
\end{proof}

\begin{proof}[Proof of Proposition \ref{rareprop}]
Let 
\[
Z= \frac{1}{2^N}\sum_{Np\le j\le N}{N\choose j}\,,
\]
so that 
\[
\nu_A(j) = \frac{{N\choose j}}{2^N Z} 1_A(j)\,.
\]
To explore the choice of sampling distribution let $\mu$ be the Binomial$(N, \theta)$ distribution for fixed $1/2<\theta< 1$. Then 
\begin{align*}
L_A &= D(\nu_A\|\mu)=\sum_j \log(\nu_A(j)/\mu(j)) \nu_A(j)\\
&= -\frac{1}{2^NZ} \sum_{Np\le j\le N} \log(2^N Z \theta^j (1-\theta)^{N-j}) {N\choose j}\\
&= -\log(2^N Z (1-\theta)^N) - \frac{\log(\theta/(1-\theta))}{2^N Z} \sum_{Np\le j\le N} j {N \choose j}\,.
\end{align*}
An identity of de Moivre (see \citet{diaconiszabell}) shows that for any $k$, $0\le k\le N$, 
\[
\frac{1}{2^N} \sum_{k\le j\le N} {N\choose j}\biggl(j-\frac{N}{2}\biggr) = \frac{k}{2} b(k; N, 1/2)\,.
\]
Thus, since $Np$ is an integer, 
\[
L_A = -\log(2^N Z(1-\theta)^N) - \log(\theta/(1-\theta))\biggl(\frac{Np}{2Z} b(Np; N, 1/2) + \frac{N}{2}\biggr)\,.
\]
To approximate $Z$, use an inequality of \citet{bahadur60}, specialized here: Let 
\[
R = \frac{1}{2}b(Np; N, 1/2)\frac{Np+1}{Np+1-(N+1)/2}\,.
\]
Then 
\[
1\le \frac{R}{Z}\le 1+x^{-2}\,,
\]
where 
\[
x = \frac{Np-N/2}{\sqrt{N/4}}\,.
\]
For large $N$ and $p$ fixed, this gives
\[
Z\sim b(Np; N, 1/2)\frac{p}{2p-1}\,.
\]
Stirling's formula gives
\[
2^Nb(Np; N,1/2) \sim \frac{(p^p(1-p)^{1-p})^N}{\sqrt{2\pi N p(1-p)}}\,.
\]
Putting these approximations into $L_A$, we get
\[
L_A \sim -N \log(p^p (1-p)^{1-p} (1-\theta) (\theta/(1-\theta))^p)\,.
\]
The right side, as a function of $\theta$, is minimized when $\theta=p$. Plugging this in gives the claim. 
\end{proof}

\begin{proof}[Proof of Theorem \ref{flawthm}]
Let $X$ be a random variable with law $\mu$. Then note that 
\begin{align*}
1 &= \ee(\rho(X)) = \int_0^\infty \pp(\rho(X) \ge t) dt\\
&\ge \sum_{k=0}^\infty \int_{2^k}^{2^{k+1}}\pp(\rho(X)\ge t) dt\,.
\end{align*}
Therefore, for any $l\ge 0$,
\[
\min_{0\le k\le l} \int_{2^k}^{2^{k+1}}\pp(\rho(X)\ge t) dt\le \frac{1}{l+1}\sum_{k=0}^l\int_{2^k}^{2^{k+1}}\pp(\rho(X)\ge t) dt\le \frac{1}{l+1}\,.
\]
Thus, there exists $k\le l$ such that 
\[
\int_{2^k}^{2^{k+1}}\pp(\rho(X)\ge t) dt\le \frac{1}{l+1}\,.
\]
Fixing $l$, take any such $k$. The above inequality implies that there exists $t\in [2^k, 2^{k+1}]$ such that 
\[
\pp(\rho(X)\ge t) \le \frac{1}{(l+1)2^k}\,.
\]
Now take any $\ep > 0$. Let  $l = [1/\ep^3]$, where $[1/\ep^3]$ is the integer part of $1/\ep^3$. Then there exists $k\le l$ and $t\in [2^k, 2^{k+1}]$ such that the above inequality is satisfied. Let $n = [t/\ep^2]+1$. Then 
\begin{align*}
\pp(\max_{1\le i\le n} \rho(X_i)\ge \ep^2 n) &\le n \pp(\rho(X)\ge \ep^2 n) \\
&\le n \pp(\rho(X)\ge t)\\
&\le \frac{n}{(l+1)2^k}\le \frac{\ep^3 n }{2^k}\le \frac{\ep (t+1)}{2^k} \le \frac{\ep(2^{k+1}+1)}{2^k}\le 3\ep\,.
\end{align*}
Consequently, for this $n$, 
\begin{align*}
\pp(v_n(f)\ge \ep) &\le \pp\biggl(\frac{1}{n^2} \sum_{i=1}^n f(X_i)^2\rho(X_i)^2\ge \ep\biggr)\\
&\le \pp(\max_{1\le i\le n} \rho(X_i)\ge \ep^2n) + \pp\biggl(\frac{1}{n} \sum_{i=1}^n f(X_i)^2\rho(X_i)\ge \frac{1}{\ep}\biggr)\\
&\le 3\ep + \ep\, \ee\biggl(\frac{1}{n} \sum_{i=1}^n f(X_i)^2\rho(X_i)\biggr) \\
&= 3\ep + \ep\, \ee(f(X)^2\rho(X)) = 3\ep + \ep \|f\|_{L^2(\nu)}^2\,.
\end{align*}
To complete the proof, note that $n\le \ep^{-2}t \le \ep^{-2}2^{k+1} \le \ep^{-2}2^{l+1}\le \ep^{-2}2^{1+\ep^{-3}}$.
\end{proof}

\begin{proof}[Proof of Theorem \ref{qthm}]
Since $0\le Q_n\le 1$, therefore
\begin{align*}
q_n = \ee(Q_n) &\le \frac{q_n}{2} +\pp\biggl(Q_n \ge \frac{q_n}{2}\biggr)\\
&= \frac{q_n}{2} + \pp\biggl(M_n \ge \frac{q_n S_n}{2}\biggr)\\
&\le \frac{q_n}{2} + \pp\biggl(S_n < \frac{n}{2}\biggr)+\pp\biggl(M_n \ge \frac{q_n n}{4}\biggr)\\
&\le \frac{q_n}{2} + 2\ep_n + \pp\biggl(M_n \ge \frac{q_n n}{4}\biggr)\,.
\end{align*}
Suppose that 
\begin{align}\label{qn8}
\ep_n\le \frac{q_n}{8}\,.
\end{align}
Then by the previous display,
\begin{align}\label{qn4}
\pp\biggl(M_n \ge \frac{q_n n}{4}\biggr) \ge \frac{q_n}{4}\,.
\end{align}
Let $k := [8/q_n]$ and $l:= [n/k]$. For $1\le j\le k$, define
\[
M_{n,j} := \max_{(j-1)l+1\le i\le jl} \rho(X_i)\,.
\] 
Then for any $x\ge 0$,
\begin{align*}
\pp(S_n \ge kx) &\ge \pp(M_{n,j} \ge x \text{ for all } 1\le j\le k)\\
&= (\pp(M_{n,1} \ge x))^k\\
&= (1-(\pp(\rho(X_1)< x))^l)^k\\
&= (1-(\pp(M_n< x))^{l/n})^k\\
&= (1-(1-\pp(M_n\ge x))^{l/n})^k\,.
\end{align*}
Since $k\le 8/q_n$ and $l\ge n/k - 1\ge q_n n/8 -1$, this gives
\[
\pp(S_n \ge kx)\ge (1-(1-\pp(M_n\ge x))^{q_n/8 - 1/n})^{8/q_n}\,.
\]
Suppose that 
\begin{equation}\label{qn9}
\frac{1}{n}\le \frac{q_n}{16}\,.
\end{equation}
Then the previous equation gives
\[
\pp(S_n \ge kx)\ge (1-(1-\pp(M_n\ge x))^{q_n/16})^{8/q_n}\,.
\]
Taking $x = q_nn/4$, assuming \eqref{qn8} and \eqref{qn9}, and using \eqref{qn4}, gives 
\begin{align*}
\pp(S_n \ge 2n) \ge (1-(1-q_n/4)^{q_n/16})^{8/q_n}\,.
\end{align*}
Now note that $1-(1-y)^{y/4}$ is asymptotic to $y^2/4$ as $y\ra 0$, and is positive everywhere in the interval $(0,1)$. Therefore there is a positive constant $C_1$ such that $1-(1-y)^{y/4}\ge C_1y^2$ for all $y\in [0,1]$. Using this in the above inequality gives 
\[
\pp(S_n \ge 2n) \ge e^{-8q_n^{-1}\log(C_2/q_n)}\,,
\] 
where $C_2$ is a universal constant. By Markov's inequality, $\pp(S_n \ge 2n)\le \ep_n$. Therefore
\[
e^{-8q_n^{-1}\log(C_2/q_n)}\le \ep_n\,.
\]
This shows that as $\ep_n\ra0$, $q_n$ must also tend to zero. Using this and the monotonicity of the map $x\mapsto (\log x)/x$ for $x\ge e$, it is easy to show that 
\[
q_n \le \frac{C_3 \log \log (1/\ep_n)}{\log (1/\ep_n)}\,,
\]
where $C_3$ is a universal constant. 
Note that this holds under \eqref{qn8} and \eqref{qn9}. The maximum in the statement of the theorem accounts for these constraints. 
\end{proof}

\begin{proof}[Proof of Theorem \ref{gibbsthm}]
By the integrability condition on $H$ and the dominated convergence theorem, it is easy to see that $F$ is infinitely differentiable. Moreover, if $Y$ is a random variable with law $G_\beta$, then 
\begin{equation}\label{fp}
F'(\beta) = -\ee(H(Y))
\end{equation}
and 
\begin{equation}\label{fpp}
F''(\beta) = \var(H(Y))\,.
\end{equation}
The probability density of $G_\beta$ with respect to $G_{\beta_0}$ is 
\[
\rho(x) = \frac{Z(\beta_0)}{Z(\beta)} \exp(-(\beta-\beta_0) H(x))\,.
\]
Therefore
\[
\frac{\hat{Z}_n(\beta)}{Z(\beta)} = \frac{1}{n}\sum_{i=1}^n \rho(X_i)\,.
\]
In the notation of Theorem \ref{impthm}, this is nothing but $I_n(1)$. Now note that if $Y\sim G_\beta$, then by \eqref{fp} and \eqref{fpp},
\begin{align*}
\ee(\log\rho(Y)) &= F(\beta_0)-F(\beta) - (\beta-\beta_0)\ee(H(Y))\\
&= F(\beta_0)-F(\beta) - (\beta_0-\beta) F'(\beta) = L\,,
\end{align*}
and
\begin{align*}
\var(\log \rho(Y)) = (\beta_0-\beta)^2\var(H(Y)) = (\beta_0-\beta)^2 F''(\beta)= \sigma^2\,.
\end{align*}
The proof is now easily completed by an application of Theorem \ref{impthm}, together with Chebychev's inequality for bounding the tail probabilities occurring in the statement of Theorem \ref{impthm}.
\end{proof}

\begin{proof}[Proof of Theorem \ref{statmechthm}]
Let $\rho_N$ be the probability density of $G_{N,\beta}$ with respect to $G_{N, \beta_0}$. As in the proof of Theorem \ref{gibbsthm}, we have 
\begin{equation}\label{later1}
\log \rho_N(x) = F_N(\beta_0)-F_N(\beta) -(\beta-\beta_0)H_N(x)\,.
\end{equation}
For each $\gamma$, let $Y_{N,\gamma}$ be a random variable with law $G_{N,\gamma}$. A simple computation shows that for any  bounded measurable function $\phi:\rr\ra\rr$,
\[
\frac{d}{d\gamma} \ee(\phi(H_N(Y_{N,\gamma}))) = \cov(\phi(H_N(Y_{N,\gamma})), H_N(Y_{N,\gamma}))\,.
\] 
It is an easy fact that if $X$ is a real-valued random variable and $f$ and $g$ are two increasing functions, then $\cov(f(X), g(X))\ge 0$. From this and the above identity, it follows that for any bounded increasing function $\phi$, 
\[
\frac{d}{d\gamma} \ee(\phi(H_N(Y_{N,\gamma})))\ge 0\,.
\]
In particular, for any $t\in \rr$, $\pp(H_N(Y_{N,\gamma})\ge t)$ is an increasing function of $\gamma$. This is an important observation that will be used below.

Take any $\gamma$ such that $p$ is well-defined and differentiable in an open  neighborhood of $\gamma$. Note that $F_N$ is a convex function, since $F_N''$ is nonnegative by \eqref{fpp}. Therefore for any $h>0$,
\[
F_N'(\gamma) \le \frac{F_N(\gamma+h)-F_N(\gamma)}{h}\,.
\]
Consequently, if $h$ is small enough, then 
\[
\limsup_{N\ra\infty} \frac{F_N'(\gamma)}{L_N} \le \frac{p(\gamma+h)-p(\gamma)}{h}\,.
\]
Taking $h\ra 0$, we get
\[
\limsup_{N\ra\infty} \frac{F_N'(\gamma)}{L_N} \le p'(\gamma)\,.
\]
Similarly,
\[
\liminf_{N\ra\infty} \frac{F_N'(\gamma)}{L_N} \ge p'(\gamma)\,.
\]
This proves that for all $\gamma$ in an open neighborhood of $\beta$,
\[
\lim_{N\ra\infty} \frac{F_N'(\gamma)}{L_N} = p'(\gamma)\,.
\]
Using the monotonicity of $F_N'$ and $p'$ and the continuity of $p'$ at $\beta$, it is easy to conclude from the above identity that for any sequence $\gamma_N\ra\beta$,
\begin{equation}\label{fpn}
\lim_{N\ra\infty} \frac{F_N'(\gamma_N)}{L_N} = p'(\beta)\,.
\end{equation}
By \eqref{fp}, note that for any $\gamma$ 
\[
|F_N'(\gamma)| = |\ee(H_N(Y_{N,\gamma}))|\le CL_N\,.
\]
Therefore
\[
\int_\beta^{\beta+L_N^{-1/2}}F_N''(\gamma) d\gamma = F_N'(\beta+L_N^{-1/2})-F_N'(\beta)\le 2CL_N\,.
\]
Thus, there exists $\gamma_N\in [\beta, \beta+L_N^{-1/2}]$ such that 
\begin{equation}\label{fppn}
F_N''(\gamma_N)\le 2CL_N^{3/2}\,.
\end{equation}
Since $L_N\ra\infty$, therefore $\gamma_N \ra\beta$. Hence by \eqref{fp}, \eqref{fpp}, \eqref{fpn} and \eqref{fppn},
\begin{equation}\label{later2}
\lim_{N\ra\infty} \ee\biggl(\frac{-H_N(Y_{N,\gamma_N})}{L_N}\biggr) = \lim_{N\ra\infty}\frac{F_N'(\gamma_N)}{L_N} = p'(\beta)
\end{equation}
and
\[
\lim_{N\ra\infty} \var\biggl(\frac{-H_N(Y_{N,\gamma_N})}{L_N}\biggr) = \lim_{N\ra\infty}\frac{F_N''(\gamma_N)}{L_N^2} =0\,.
\]
This implies that $-H_N(Y_{N, \gamma_N})/L_N \ra p'(\beta)$ in probability. Therefore, by our previous observation about the monotonicity of tail probabilities, 
\[
\lim_{N\ra\infty} \pp\biggl(\frac{-H_N(Y_{N,\beta})}{L_N} \ge p'(\beta)+\delta\biggr) = 0
\]
for any $\delta>0$. In a similar manner, one can show that
\[
\lim_{N\ra\infty} \pp\biggl(\frac{-H_N(Y_{N,\beta})}{L_N} \le p'(\beta)-\delta\biggr) = 0\,.
\]
Thus, $-H_N(Y_{N,\beta})/L_N \ra p'(\beta)$ in probability. Consequently, 
\[
\frac{\log \rho_N(Y_{N,\beta})}{L_N} \ra p(\beta_0)-p(\beta)-(\beta_0-\beta)p'(\beta) = q(\beta)
\]
in probability. The proofs of parts (i) and (ii) are now easily completed by applying Theorem~\ref{impthm}. To prove part (iii), take any $\gamma\in (\beta_0,\beta)$. Since $p'$ is nonconstant in any neighborhood of $\beta$ and $p'$ is an increasing function due to the convexity of $p$, therefore $p'(\gamma)< p'(\beta)$. Thus, by the convexity of $p$,
\begin{align}
q(\beta)-q(\gamma) &=(\beta-\beta_0) (p'(\beta)-p'(\gamma)) + (\beta-\gamma)p'(\gamma) + p(\gamma)-p(\beta) \nonumber\\
&\ge (\beta-\beta_0) (p'(\beta)-p'(\gamma)) + (\beta-\gamma) (p'(\gamma) - p'(\beta))\nonumber\\
&= (\gamma-\beta_0)(p'(\beta)-p'(\gamma)) > 0\,.\label{qinc}
\end{align}
By part (i) of the theorem, this implies that if $b = q(\beta)$, then 
\[
\frac{\hat{Z}_{n,N}(\gamma)}{Z_N(\gamma)} \ra 1
\]
in probability, and therefore 
\begin{equation}\label{probconv}
\frac{\log \hat{Z}_{n,N}(\gamma)}{L_N} \ra p(\gamma)
\end{equation}
in probability. Now note that for any $\beta'$, 
\[
\biggl|\frac{d}{d\beta'} \log \hat{Z}_{n,N}(\beta')\biggr| = \biggl|\frac{\sum_{i=1}^n H_N(X_i)\exp(-(\beta'-\beta_0)H_N(X_i))}{\sum_{i=1}^n \exp(-(\beta'-\beta_0)H_N(X_i))}\biggr| \le CL_N\,.
\]
Therefore
\begin{equation}\label{zbd}
|\log \hat{Z}_{n,N}(\beta) - \log \hat{Z}_{n,N}(\gamma)|\le CL_N (\beta-\gamma)\,.
\end{equation}
Since $\gamma$ is an arbitrary point in $(\beta_0,\beta)$, it is now easy to complete the proof of part (iii) using \eqref{probconv}, \eqref{zbd} and the continuity of $p$.
\end{proof}
\begin{proof}[Proof of Theorem \ref{newthm}]
Suppose that $\{W_N\}_{N\ge 1}$ is a sequence of real-valued random variables and $c$ is a real number. In this proof, we will use the notation
\[
\textup{P-}\liminf_{N\ra\infty} W_N \ge c
\]
to mean that for any $\ep >0$, $\lim_{N\ra\infty}\pp(W_N\ge c-\ep) = 1$. Similarly, $\textup{P-}\limsup_{N\ra\infty} W_N \le c$ means that for any $\ep>0$, $\lim_{N\ra\infty}\pp(W_N\le c+\ep) = 1$, and $\textup{P-}\lim_{N\ra\infty} W_N = c$ means that both of these hold, that is, $W_N\ra c$ in probability. 

First, suppose that $b\le q(\beta)$. Since $p$ has no interval of linear behavior in the interval $I$, therefore the convexity of $p$ implies that $p'$ is strictly increasing in $I$. From this and a variant of \eqref{qinc} it is easy to see that  in the interval $I\cap[\beta_0,\infty)$, $q$ is continuous and strictly increasing. Moreover, $q(\beta_0)=0$. It follows that for any $a\in [0,q(\beta)]$, there exists $\gamma\in[\beta_0,\beta]$ such that $q(\gamma)=a$. Therefore, since $b \le q(\beta)$, therefore $b = q(\gamma)$ for some $\gamma\in [\beta_0,\beta]$. Suppose that $\gamma > \beta_0$. Then by part (i) and part (iii) of Theorem \ref{statmechthm}, 
\begin{align}
&\textup{P-}\lim_{N\ra\infty}\frac{1}{L_N} \log\biggl(\sum_{i=1}^n \exp(-(\gamma -\beta_0)H_N(X_i))\biggr) \nonumber \\
&= q(\gamma)+p(\gamma)-p(\beta_0) = (\gamma-\beta_0)p'(\gamma)\,. \label{znb}
\end{align}
Let $U_N(\gamma)$ denote the left-hand side of \eqref{znb}, without the limit. Using the positivity of the second derivative, it is easy to see that $U_N$ is a convex function of $\gamma$. Take any $\gamma'\in (\beta_0,\gamma)$. Then by the convexity of $U_N$, we have
\begin{align*}
\frac{\max_{1\le i\le n} (-H_N(X_i))}{L_N} &\ge \frac{1}{L_N}\frac{\sum_{i=1}^n (-H_N(X_i)) \exp(-(\gamma-\beta_0) H_N(X_i))}{\sum_{i=1}^n \exp(-(\gamma-\beta_0) H_N(X_i))}\\
&= U_N'(\gamma) \\
&\ge \frac{U_N(\gamma)-U_N(\gamma')}{\gamma-\gamma'}\,.
\end{align*}
Now let $N\ra\infty$ on both sides and apply \eqref{znb}, and then let $\gamma'\ra\gamma$ on the right. This gives
\begin{align}\label{plim1}
\textup{P-}\liminf_{N\ra\infty}\frac{\max_{1\le i\le n} (-H_N(X_i))}{L_N} &\ge p'(\gamma)\,.
\end{align}
Next, note that  
\begin{align*}
&\log\biggl(\sum_{i=1}^n \exp(-(\beta-\beta_0)H_N(X_i))\biggr) \\
&\le (\beta-\gamma) \max_{1\le i\le n} (-H_N(X_i)) + \log\biggl(\sum_{i=1}^n \exp(-(\gamma-\beta_0)H(X_i))\biggr)
\end{align*}
By \eqref{znb} and \eqref{plim1}, this implies that
\begin{equation}\label{q1}
\textup{P-}\limsup_{N\ra\infty}\frac{- \log Q_{n,N}}{L_N} \le 0\,.
\end{equation}
Note that this inequality was proved under the assumption that $\gamma > \beta_0$. Next, suppose that $\gamma=\beta_0$. Observe the easy inequality
\begin{align*}
\log\biggl(\sum_{i=1}^n \exp(-(\beta-\beta_0)H_N(X_i))\biggr) &\le (\beta-\beta_0) \max_{1\le i\le n} (-H_N(X_i)) + \log n\,.
\end{align*}
From this and the fact that $L_N^{-1}\log n \ra q(\beta_0)=0$, it follows that \eqref{q1} holds even if $\gamma=\beta_0$.  Next, note that we trivially have
\begin{align*}
\log\biggl(\sum_{i=1}^n \exp(-(\beta-\beta_0)H_N(X_i))\biggr) &\ge \log\biggl(\max_{1\le i\le n} \exp(-(\beta-\beta_0)H_N(X_i))\biggr)\,
\end{align*}
which is same as 
\begin{equation}\label{q2}
\textup{P-}\liminf_{N\ra\infty}\frac{- \log Q_{n,N}(\beta)}{L_N} \ge 0\,.
\end{equation}
Equations \eqref{q1} and \eqref{q2} prove that if $b\le q(\beta)$, then $L_N^{-1}\log Q_{n,N}\ra 0$ in probability. Next, note that $Q_{n,N}\in [0,1]$, which implies that $\ee(Q_{n,N})\in [0,1]$ and hence 
\begin{equation}\label{e1}
\frac{\log \ee(Q_{n,N})}{L_N}\le 0\,.
\end{equation} 
On the other hand, Jensen's inequality gives 
\begin{equation}\label{e2}
\frac{\log \ee(Q_{n,N})}{L_N} \ge \frac{\ee(\log Q_{n,N})}{L_N}\,.
\end{equation}
It is not difficult to see that since $|H_N|\le CL_N$ and $L_N^{-1}\log n \ra b<\infty$, therefore the random variable $|L_N^{-1}\log Q_{n,N}|$ is bounded by a non-random constant that does not vary with $N$. Since we already know that 
\[
L_N^{-1}\log Q_{n,N} \ra 0
\]
in  probability, this shows that
\[
\lim_{N\ra\infty}\frac{\ee(\log Q_{n,N})}{L_N} = 0\,.
\]
Combining this with \eqref{e1} and \eqref{e2}, we get 
\[
\lim_{N\ra\infty} \frac{\log \ee(Q_{n,N})}{L_N} = 0\,.
\]
This completes the proof of part (i) of the theorem. Next, suppose that $b> q(\beta)$. Then note that by Theorem~\ref{statmechthm},
\begin{align}\label{second1}
\textup{P-}\lim_{N\ra\infty} \frac{Z_N(\beta_0)}{n Z_N(\beta)} \sum_{i=1}^n \exp(-(\beta-\beta_0)H_N(X_i)) = 1\,.
\end{align}
Next, let
\[
M_N := \max_{1\le i\le n} \exp(-(\beta-\beta_0)H_N(X_i))\,.
\]
Since $p$ is continuously differentiable in the interval $I$ and $p'$ is strictly increasing, therefore there exists $\gamma \in I\cap (\beta, \infty)$ such that $b > q(\gamma)$. 
If $M_N > \exp(L_N (\beta-\beta_0)p'(\gamma))$, then 
\begin{align*}
M_N \le \sum_{i=1}^n \exp(-(\beta-\beta_0)H_N(X_i))1_{\{-H_N(X_i)> L_Np'(\gamma)\}} =:M_N'\,.
\end{align*}
Therefore
\begin{align}\label{mn1}
 M_N &\le \max\{\exp(L_N(\beta-\beta_0) p'(\gamma)),\, M_N'\}\,.
\end{align}
Define
\begin{equation}\label{rndef}
R_N := \frac{Z_N(\beta_0)M_N'}{nZ_N(\beta)}\,.
\end{equation}
Note that if $Y$ is a random variable with law $G_{N,\beta}$, then for any $\theta>0$, 
\begin{align}
\ee(R_N) &= \ee\biggl(\frac{Z_N(\beta_0)}{Z_N(\beta)} \exp(-(\beta-\beta_0)H_N(X_1))1_{\{-H_N(X_1)> L_Np'(\gamma)\}}\biggr)\nonumber\\
&= \pp(-H_N(Y)> L_Np'(\gamma))\nonumber\\
&\le e^{-\theta L_N p'(\gamma)} \ee(e^{-\theta H_N(Y)})\nonumber\\
&= \exp(-\theta L_N p'(\gamma) + F_N(\beta+\theta)-F_N(\beta))\,. \label{zn1}
\end{align}
Let
\begin{equation*}\label{cthetadef}
c(\theta) := p(\beta+\theta)-p(\beta) - \theta p'(\gamma)\,,
\end{equation*}
and choose $\theta = (\gamma-\beta)/2$. Then by the strict convexity of $p$ in $I$, 
\begin{align}\label{ctheta}
c(\theta) \le \theta (p'(\beta+\theta) -p'(\gamma))  < 0\,.
\end{align}
By \eqref{zn1} and Markov's inequality,
\begin{align*}
&\pp\biggl(\frac{\log R_N}{L_N}\ge \frac{c(\theta)}{2}\biggr) \le e^{-L_Nc(\theta)/2}\ee(R_N)\\
&\le  \exp\biggl(-\frac{L_Nc(\theta)}{2}  -\theta L_N p'(\gamma) + F_N(\beta+\theta)-F_N(\beta)\biggr)\,.
\end{align*}
Taking logarithm on both sides, dividing by $L_N$ and sending $N\ra\infty$, we get
\begin{equation}\label{later3}
\limsup_{N\ra\infty} \frac{1}{L_N}\log \pp\biggl(\frac{\log R_N}{L_N}\ge \frac{c(\theta)}{2}\biggr)<0\,.
\end{equation}
In particular, 
\begin{align}\label{mn2}
\textup{P-}\limsup_{N\ra\infty}\frac{\log R_N}{L_N}\le \frac{c(\theta)}{2}\,.
\end{align}
Next, note that
\begin{align}
&\lim_{N\ra\infty} \frac{1}{L_N} \log \biggl(\frac{Z_N(\beta_0)}{nZ_N(\beta)}\exp(L_N(\beta-\beta_0)p'(\gamma) )\biggr)\nonumber\\
&= p(\beta_0) -p(\beta)- b + (\beta-\beta_0)p'(\gamma)\nonumber\\
&\le q(\gamma)-b\,.\label{mn3}
\end{align}
From \eqref{mn1}, \eqref{mn2} and \eqref{mn3} we get
\begin{align}\label{second2}
\textup{P-}\limsup_{N\ra\infty} \frac{1}{L_N}\log \biggl(\frac{Z_N(\beta_0)M_N}{n Z_N(\beta)}\biggr) \le \max\biggl\{q(\gamma)-b,\, \frac{c(\theta)}{2}\biggr\}\,.
\end{align}
By combining \eqref{second1}, \eqref{second2}, \eqref{ctheta} and the fact that $q(\gamma)<b$, this shows that there exists $c< 0$ such that $\pp(L_N^{-1}\log Q_{n,N}\le c) \ra 1$ as $N\ra\infty$.

Next, let
\[
V_N := \frac{Z_N(\beta_0)}{n Z_N(\beta)} \sum_{i=1}^n \exp(-(\beta-\beta_0)H_N(X_i))\,.
\]
Then $V_N$ is nothing but the importance sampling estimate $I_n(1)$ when the sampling measure is $G_{N,\beta_0}$ and the target measure is $G_{N,\beta}$.  In this setting, we have already seen in the proof of Theorem~\ref{statmechthm} that the quantity $L$ of Theorem~\ref{impthm} is asymptotic to $L_Nq(\beta)$ (to see this, simply combine the equations \eqref{later1} and \eqref{later2}). Combined with the fact that $L_N^{-1}\log n \ra b$, this implies that the quantity $t$ of Theorem~\ref{impthm} is asymptotic to $L_N(b- q(\beta))$ in the present setting. 

Next, let $Y\sim G_{N,\beta}$ and $\rho_N$ be the probability density of $G_{N,\beta}$ with respect to $G_{N,\beta_0}$. The formula~\eqref{later1} implies that $\log \rho_N(Y)$ is asymptotic to $L_N(p(\beta_0)-p(\beta))- (\beta-\beta_0)H_N(Y)$. Combining all of these observations and applying Theorem \ref{impthm}, it follows that there is a positive constant $c$ (which may depend on $\beta$, $\beta_0$ and $b$) such that for all large enough $N$,
\begin{align*}
\ee|V_N-1| \le e^{-cL_N} + \sqrt{\pp(-H_N(Y) \ge L_N(p'(\beta)+c))}\,.
\end{align*}
Take any $\theta > 0$. Then 
\[
\pp(-H_N(Y) \ge L_N(p'(\beta)+c)) \le e^{-\theta L_N(p'(\beta)+c)} \ee(e^{-\theta H_N(Y)})\,.
\]
It is easy to see that $\log \ee(e^{-\theta H_N(Y)})$ is asymptotic to $L_N(p(\beta+\theta)-p(\beta))$. Thus, the logarithm of the right-hand side in the above display is asymptotic to $-\theta c L_N + L_N(p(\beta+\theta)-p(\beta)-\theta p'(\beta))$. Since $p'$ is continuous in a neighborhood of $\beta$, we can choose a $\theta$ small enough so that $\theta c > p(\beta+\theta)-p(\beta)-\theta p'(\beta)$. Therefore, there exists $C_1 > 0$ such that $\pp(-H_N(Y) \ge L_N(p'(\beta)+c))\le e^{-C_1L_N}$ for all large enough $N$. Combining these steps we see that there is a positive constant $C_2$ such that $\ee|V_N-1|\le e^{-C_2L_N}$ for all large $N$, and hence 
\begin{equation}\label{vn}
\pp(V_N < 1/2)\le 2e^{-C_2L_N}\,. 
\end{equation}
Now note that by \eqref{mn1},
\begin{equation}\label{qnn}
Q_{n,N} \le \frac{\max\{S_N, R_N\}}{V_N}\,,
\end{equation}
where $R_N$ is defined in \eqref{rndef} and 
\[
S_N := \frac{Z_N(\beta_0)}{nZ_N(\beta)}\exp(L_N(\beta-\beta_0) p'(\gamma))\,.
\]
Recall that by \eqref{later3}, there are positive constants $C_3$ and $C_4$ such that for all large enough $N$,
\begin{equation}\label{rn}
\pp(R_N \ge e^{-C_3L_N})\le e^{-C_4L_N}\,.
\end{equation}
Since $Q_{n,N} \in [0,1]$, \eqref{vn}, \eqref{qnn} and \eqref{rn} imply that
\begin{align*}
\ee(Q_{n,N}) &\le \pp(V_N < 1/2) + \pp(R_N \ge e^{-C_3L_N}) + 2\max\{S_N, e^{-C_3L_N}\}\\
&\le 2e^{-C_2L_N} + e^{-C_4L_N} + 2\max\{S_N, e^{-C_3L_N}\}\,.
\end{align*}
However, we have already seen in \eqref{mn3} that there is a constant $C_5>0$ such that $S_N\le e^{-C_5 L_N}$ for all large enough $N$. Thus, 
\[
\limsup_{N\ra \infty} L_N^{-1}\log \ee(Q_{n,N}) < 0.
\]
This completes the proof of the theorem. 
\end{proof}
\vskip.2in
\noindent {\bf Acknowledgments.} We thank Ben Bond, Brad Efron, Jonathan Huggins, Don Knuth, Shuangning Li, Art Owen, Daniel Roy and David Siegmund for helpful comments. We also thank the referees and the associate editor for many helpful suggestions, and the editorial board of AAP for their patience with our long delay in submitting the revision.

\bibliographystyle{plainnat}

\end{document}